\def\titlerunning#1{\gdef\titrun{#1}}
\def\author#1{\gdef\autrun{\def\and{\unskip, }#1}\gdef\@author{#1}}
\def\address#1{{\def\and{\\\hspace*{18pt}}\renewcommand{\thefootnote}{}%
\footnote {#1}}%
\markboth{\autrun}{\titrun}}
\def\email#1{e-mail: #1}
\def\subjclass#1{{\renewcommand{\thefootnote}{}%
\footnote{\emph{Mathematics Subject Classification (2010):} #1}}}
\def\keywords#1{\par\medskip
\noindent\textbf{Keywords.} #1}
\newtheorem{thm}{Theorem}[section]
\newtheorem{corollary}[thm]{Corollary}
\newtheorem{proposition}[thm]{Proposition}
\newtheorem{lemma}[thm]{Lemma}
\theoremstyle{definition}
\newtheorem{remark}[thm]{Remark}
\numberwithin{equation}{section}
\def\CP1{\mathbb{C}\mathrm{P}^1}
\def\dim{{\rm dim}}
\def\diag{{\rm diag}}
\def\Id{{\rm Id}}
\def\td{{\widetilde d}}
\def\mcM{{\mathcal M}}
\def\oM{{\overline{\mathcal{M}}}}
\def\mbC{{\mathbb C}}
\def\mbP{{\mathbb P}}
\def\tm{{\widetilde m}}
\def\m{{\widetilde{m}}}
\def\CP{{{\mathbb C}{\rm P}}}
\def\mbQ{{\mathbb Q}}
\begin{document}


\baselineskip=17pt


\titlerunning{Top tautological group of $\mcM_{g,n}$}

\title{Top tautological group of $\mcM_{g,n}$}

\author{Alexandr Buryak \and Sergey Shadrin \and Dimitri Zvonkine}

\date{}

\maketitle

\address{A. Buryak: Department of Mathematics, ETH Zurich, Ramistrasse 101 8092, Zurich, Switzerland; \email{buryaksh@gmail.com}, corresponding author
\and
S. Shadrin: Korteweg-de Vries Institute for Mathematics, University of Amsterdam, P.~O.~Box 94248, 1090 GE Amsterdam, The Netherlands; \email{s.shadrin@uva.nl}
\and
D. Zvonkine: Institut Math\'{e}matique de Jussieu, CNRS and UPMC, 4 place Jussieu, 75013 Paris, France; \email{dimitri.zvonkine@gmail.com}
}

\subjclass{Primary 14H10; Secondary 55N10}


\begin{abstract}
We describe the structure of the top tautological group in the cohomology of the moduli space of smooth genus $g$ curves with $n$ marked points.   
\keywords{Moduli space of curves, cohomology, tautological groups}
\end{abstract}

\section{Introduction}

In this paper we study the cohomology groups of the moduli space of smooth genus $g$ curves with $n$ marked points. This moduli space is denoted by $\mcM_{g,n}$. The space $\mcM_{g,0}$ will be denoted by $\mcM_g$. 

The cohomology of $\mcM_{g,n}$ has a distinguished subring of tautological classes 
$$
R^*(\mcM_{g,n})\subset H^{\rm even} (\mcM_{g,n};\mbQ)
$$
studied extensively since Mumford's seminal article \cite{Mum83}. 

A great step towards an understanding of the tautological ring of $\mcM_g$ was done by C. Faber in \cite{Fab99}. He formulated three conjectures that give a complete description of~$R^*(\mcM_g)$. These conjectures are called the socle conjecture, the top intersection conjecture and the perfect pairing conjecture. The socle conjecture was proved in \cite{Loo95}, there are several proofs of the top intersection conjecture (see \cite{GP98,LX09,BS11}). The perfect pairing conjecture is true up to genus $23$, but the accumulating evidence suggests it may be wrong for $g \geq 24$. 

The socle conjecture says that $R^{>g-2}(\mcM_g)=0$ and $R^{g-2}(\mcM_g)=\mbQ$. We will recall the other two conjectures in Section~\ref{subsection: Faber's conjectures}.

Analogous statements can be formulated about the tautological ring of $\mcM_{g,1}$. The socle property in this case says that $R^{>g-1}(\mcM_{g,1})=0$ and $R^{g-1}(\mcM_{g,1})=\mbQ$ (\cite{Loo95}). The top intersection property for $\mcM_{g,1}$ can be easily derived from the original top intersection statement for $\mcM_g$.  The perfect pairing conjecture in this case is also open.

In this paper we formulate and prove a socle and intersection numbers property for~$\mcM_{g,n}$. In particular, the generalized socle property says that $R^{>g-1}(\mcM_{g,n})=0$ and $R^{g-1}(\mcM_{g,n})=\mbQ^n$. The vanishing part $R^{>g-1}(\mcM_{g,n})=0$ was actually proved in \cite{Ion02}. 

Let us say a few words about the idea of our proof. One can choose different spanning families in the tautological groups. On one hand the tautological groups of $\mcM_{g,n}$ are spanned by monomials in $\psi$-classes and $\kappa$-classes. On the other hand the tautological groups of $\mcM_{g,n}$ are spanned by double ramification cycles. A technique for working with these cycles was developed, e.~g., in papers \cite{Ion02,Sha03,SZ08,BSSZ12}. In \cite{Ion02} it was used for proving the vanishing $R^{>g-1}(\mcM_{g,n})=0$ and in \cite{Sha03} it was used for studying the intersection theory of the moduli space of curves. This technique was also applied in \cite{BS11} in order to give a simple proof of Faber's top intersection conjecture. In this paper we develop the theory of double ramification cycles and use it for the proof of generalized socle and top intersection properties.

\subsection{Tautological ring} 

In this section we briefly recall basic definitions related to the tautological ring of the moduli space of curves. We refer to \cite{Vak08,Zvo12} for a more detailed introduction in this subject.

Let $\oM_{g,n}$ be the moduli space of stable genus $g$ curves with $n$ marked points. The class $\psi_i\in H^2(\oM_{g,n};\mbQ)$ is defined as the first Chern class of the line bundle over $\oM_{g,n}$ formed by the cotangent lines at the $i$-th marked point. Let $\pi\colon\oM_{g,n+1}\to\oM_{g,n}$ be the forgetful map that forgets the last marked point. The class $\kappa_k\in H^{2k}(\oM_{g,n};\mbQ)$ is defined as follows: 
$$
\kappa_k:=\pi_*(\psi_{n+1}^{k+1}).
$$ 

It is convenient to define multi-index kappa classes. Let $m\ge 1$ and consider the forgetful map $\pi\colon\oM_{g,n+m}\to\oM_{g,n}$ that forgets the last $m$ marked points. Let $k_1,k_2,\ldots,k_m$ be non-negative integers. Define the class $\kappa_{k_1,k_2,\ldots,k_m}\in H^{2\sum_{i=1}^m k_i}(\oM_{g,n};\mbQ)$~by 
$$
\kappa_{k_1,k_2,\ldots,k_m}:=\pi_*\left(\psi_{n+1}^{k_1+1}\psi_{n+2}^{k_2+1}\ldots\psi_{n+m}^{k_m+1}\right).
$$
Multi-index $\kappa$-classes can be expressed as polynomials in $\kappa$-classes with one index. Conversely, any polynomial in one index $\kappa$-classes can be written as a linear combination of multi-index $\kappa$-classes.

The tautological ring $R^*(\mcM_{g,n})$ is defined as the subring of $H^*(\mcM_{g,n};\mbQ)$ generated by the classes $\kappa_j$ and $\psi_i$. The group $R^i(\mcM_{g,n})$ is defined by $R^i(\mcM_{g,n}):=R^*(\mcM_{g,n})\cap H^{2i}(\mcM_{g,n};\mbQ)$.

\subsection{Faber's conjectures}\label{subsection: Faber's conjectures}

Here we recall Faber's conjectures from~\cite{Fab99} about the structure of the tautological ring $R^*(\mcM_g)$. Let $g\ge 2$.

\begin{itemize}

\item(\emph{Socle}) $R^{>g-2}(\mcM_g)=0$ and $R^{g-2}(\mcM_g)=\mbQ$.

\item(\emph{Top intersection}) Suppose that $k_1+\cdots+k_m=g-2$ and $k_i\ge 0$. Then we have the following equation in $R^{g-2}(\mcM_g)$:   
$$
\kappa_{k_1,\ldots,k_m}=\frac{(2g-3+m)!(2g-3)!!}{(2g-2)!\prod_{i=1}^m (2k_i+1)!!}
\kappa_{g-2}.
$$ 

\item(\emph{Perfect pairing}) For any $0\leq i\leq g-2$, the cup product defines the pairing 
$$
R^i(\mcM_g)\times R^{g-2-i}(\mcM_g)\to R^{g-2}(\mcM_g)=\mbQ.
$$ 
This pairing is non-degenerate.

\end{itemize}

It is easy to see that Faber's conjectures, if true, completely determine the structure of the tautological ring $R^*(\mcM_g)$.

\subsection{Generalized Faber conjectures}\label{subsection: Zvonkine's conjectures} In this section we formulate analogous properties of the tautological ring $R^*(\mcM_{g,n})$. Assume that $g\ge 2$ and $n\ge 1$.

\begin{itemize}

\item\emph({Generalized socle}) $R^{>g-1}(\mcM_{g,n})=0$ and $R^{g-1}(\mcM_{g,n})=\mbQ^n$. The classes $\psi_i^{g-1}$, $i=1,2,\ldots,n$, form a basis in $R^{g-1}(\mcM_{g,n})$.

\item\emph({Generalized top intersection}) Suppose that $d_1+\cdots+d_n+k_1+\ldots+k_m=g-1$ and $d_i,k_j\ge 0$. Then we have the following equation in $R^{g-1}(\mcM_{g,n})$:  
\begin{align*}
\prod_{i=1}^n\psi_i^{d_i}\cdot\kappa_{k_1,k_2,\ldots,k_m}=&\frac{(2g-1)!!}{\prod_{i=1}^n(2d_i+1)!!\prod_{j=1}^m(2k_j+1)!!}\frac{(2g-3+n+m)!}{(2g-2+n)!}\times\\
&\times\sum_{i=1}^n\frac{(2g-2+n)d_i + \sum k_j}{g-1}\psi_i^{g-1}.
\end{align*}

\item\emph({Generalized perfect pairing}) The ring $R^*(\mcM_{g,n})$ is level of type $n$. In other words, a polynomial in  $\psi_1,\ldots,\psi_n,\kappa_1,\ldots,\kappa_{g-1}$ vanishes, if and only if its products with all classes of complementary dimension vanish in $R^{g-1}(\mcM_{g,n})$.

\end{itemize}

Similarly to Faber's conjectures, these properties, if true, completely determine the structure of the ring $R^*(\mcM_{g,n})$. 

In this paper we prove the generalized socle and top intersection properties. As for the perfect pairing property, it is true in many cases that can be checked on computer, but recent evidence leads to serious doubts that it is valid in general. The main result of this paper is the following theorem.
\begin{thm}
The generalized socle and top intersection properties are true.
\end{thm}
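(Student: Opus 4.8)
The plan is to prove the two assertions by separate tools, using throughout the vanishing $R^{>g-1}(\mcM_{g,n})=0$ of \cite{Ion02} to know that $g-1$ is the top degree and that every degree-$(g-1)$ monomial in $\psi$ and $\kappa$ lies in $R^{g-1}(\mcM_{g,n})$. The socle statement then has two halves: that $\psi_1^{g-1},\dots,\psi_n^{g-1}$ \emph{span} $R^{g-1}(\mcM_{g,n})$, and that they are \emph{linearly independent}. The top intersection formula is the quantitative form of the first half, so the substance of the theorem is to show that each monomial $\prod_i\psi_i^{d_i}\kappa_{k_1,\dots,k_m}$ equals the prescribed combination of the $\psi_i^{g-1}$; I would produce these relations from double ramification cycles, and establish independence (hence that the span is exactly $\mbQ^n$) from the socle evaluation by $\lambda_g\lambda_{g-1}$.

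The engine for the reduction is the double ramification cycle $\mathrm{DR}_g(\mathbf a)$, $\sum a_i=0$. On the universal curve over $\mcM_{g,n}$, which is of compact type, Hain's formula gives $\mathrm{DR}_g(\mathbf a)=\frac{1}{2^g g!}\bigl(\sum_i a_i^2\psi_i-\mathrm{bdry}\bigr)^g$; since the restriction to the open stratum lands in $R^g=0$, the boundary corrections carry all the content. I would add one auxiliary point of coefficient $a_{n+1}=-\sum_{i=1}^n a_i$ and push the cycle forward along the forgetful map, computing with the standard universal-curve relations $\psi_i=\pi^*\psi_i+D_i$, $D_iD_j=0$, $D_i^2=-\psi_i D_i$, $\pi_*\psi_{n+1}^{q}=\kappa_{q-1}$, and $\pi_*(D_i^s)=(-\psi_i)^{s-1}$. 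The sections $D_i$ then contribute $\psi_i$-monomials while the powers of $\psi_{n+1}$ contribute $\kappa$-classes, so the pushforward is an explicit polynomial in $\mathbf a$ valued in $R^{g-1}(\mcM_{g,n})$; incorporating $m$ further points of coefficient zero carrying powers $\psi^{k_j+1}$ of their cotangent classes, and forgetting them, inserts $\kappa_{k_1,\dots,k_m}$ and is what reproduces the factors $\prod_j(2k_j+1)!!$ and $(2g-3+n+m)!/(2g-2+n)!$. Extracting the coefficients of the monomials in $\mathbf a$, and varying both $\mathbf a$ and the number of auxiliary points, should yield enough relations to rewrite every degree-$(g-1)$ monomial in terms of the $\psi_i^{g-1}$ and to pin down the coefficients. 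Showing that the relations so obtained are exactly sufficient, and that their coefficients assemble into the stated double factorials, is the step I expect to be the main obstacle.

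For independence I would use the functional $\alpha\mapsto\int_{\oM_{g,n}}\tilde\alpha\,\beta\,\lambda_g\lambda_{g-1}$. Because $\lambda_g\lambda_{g-1}$ restricts to zero on every boundary divisor of $\oM_{g,n}$, the value is independent of the chosen extension $\tilde\alpha$ of $\alpha\in R^*(\mcM_{g,n})$, so this descends to a well-defined pairing on the tautological ring of the open moduli space. Taking $\beta=\prod_{l\neq j}\psi_l$ and evaluating on $\alpha=\psi_i^{g-1}$ by Faber's intersection-number formula $\int_{\oM_{g,n}}\psi_1^{e_1}\cdots\psi_n^{e_n}\lambda_g\lambda_{g-1}=\frac{(2g-3+n)!\,|B_{2g}|}{2^{2g-1}(2g)!\prod_i(2e_i-1)!!}$ for $\sum e_i=g-2+n$, one finds the resulting $n\times n$ matrix proportional to $(2g-2)\,\Id+J$ with $J$ the all-ones matrix; its eigenvalues $2g-2+n$ and $2g-2$ are nonzero for $g\ge 2$, so the matrix is non-degenerate and the $\psi_i^{g-1}$ are linearly independent. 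This forces $\dim R^{g-1}(\mcM_{g,n})=n$ once the spanning is known, and applying the same functionals to both sides of the proposed identity gives an independent numerical verification of the top intersection formula, which I would use to cross-check the coefficients emerging from the double ramification relations.

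Finally I would assemble the pieces: \cite{Ion02} bounds the top degree, the double ramification relations of \cite{BSSZ12}-type show that $\psi_1^{g-1},\dots,\psi_n^{g-1}$ span $R^{g-1}(\mcM_{g,n})$ and supply the reduction with its explicit coefficients, and the $\lambda_g\lambda_{g-1}$ pairing shows these classes are independent, so together they give $R^{g-1}(\mcM_{g,n})=\mbQ^n$ with the stated basis and the generalized top intersection formula. The genuinely hard part is the middle step — turning the explicit, but intricate, pushforward of $\mathrm{DR}_g(\mathbf a)$ into a complete and correctly normalized system of linear relations.
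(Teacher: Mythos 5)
Your treatment of the easy half is sound and coincides with the paper's: the functionals $\alpha_s=\lambda_g\lambda_{g-1}\prod_{l\ne s}\psi_l$ are exactly the classes the paper uses, the evaluation via Faber's intersection-number formula produces the same matrix proportional to $(2g-2)\Id+U$ ($U$ the all-ones matrix), and the observation that non-degeneracy of this pairing both gives $\dim R^{g-1}(\mcM_{g,n})\ge n$ and, once $\dim=n$ is known, forces the coefficients in the generalized top intersection formula, is precisely how Section~\ref{section: generalized top intersection} of the paper works. Note that in this logic the explicit coefficients never need to come out of double ramification relations at all, so your plan to have the DR computation ``supply the reduction with its explicit coefficients'' and use the pairing only as a cross-check is already more than is necessary.

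The genuine gap is the spanning statement $\dim R^{g-1}(\mcM_{g,n})\le n$, and your proposed mechanism cannot produce it. Hain's formula, pushed forward along forgetful maps with the universal-curve relations, only \emph{expresses} the classes $DR_g\left(\prod_i m_{a_i}\m_b\right)$ as explicit polynomials in $\psi$- and $\kappa$-classes; since these classes are in general nonzero (by Lemma~\ref{lemma: DR-cycles span the ring} they span $R^{g-1}(\mcM_{g,n})$), an identity ``DR class $=$ polynomial in monomials'' with unknown left-hand side yields no linear relation among the monomials, no matter how one varies $\mathbf a$ or the number of auxiliary points. The tempting shortcut --- that $DR_g(\mathbf a)$ restricted to $\mcM_{g,n+1}$ lies in $R^g(\mcM_{g,n+1})=0$, hence its pushforward vanishes --- fails because restriction to the open part does not commute with pushforward: $\pi^{-1}(\mcM_{g,n})$ strictly contains $\mcM_{g,n+1}$, and the boundary corrections in Hain's formula contribute exactly on the bubbled locus. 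Relations among DR classes require independent geometric input, which in the paper is twofold: the degeneration formula for $\psi_1\cdot DR_g$ from \cite{BSSZ12} (Lemma~\ref{lemma: intersection with psi-class}) and the Losev--Manin branching-morphism relation (Lemma~\ref{lemma: running point}). Moreover, even with these relations in hand, the paper does not rewrite monomials directly in terms of the $\psi_i^{g-1}$; it proves the upper bound by induction on $n$, organized through Lemma~\ref{lemma: symmetry} and Propositions~\ref{proposition: first step} and~\ref{proposition: second step} together with the linear-algebra lemmas of Section~\ref{section: technical lemmas}, and the induction bottoms out in Looijenga's theorem $\dim R^{g-1}(\mcM_{g,1})=1$ --- an input your proposal never invokes. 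So the step you defer as ``the main obstacle'' is not a normalization issue but the entire content of the hard half of the theorem, and the plan as stated gives no way to carry it out.
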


\subsection{Organization of the paper}

In Section~\ref{section: generalized top intersection} we prove the generalized top intersection property, assuming that the generalized socle conjecture is true. We also show that $\dim R^{g-1}(\mcM_{g,n}) \geq n$ (the easy part of the socle property). The rest of the paper is devoted to the hard part of the socle property, that is, the inequality $\dim R^{g-1}(\mcM_{g,n}) \leq n$.

In Section~\ref{section: three statements} we formulate three main ingredients of the proof of the generalized socle property: Lemma~\ref{lemma: symmetry}, Proposition~\ref{proposition: first step} and Proposition~\ref{proposition: second step}. We show how the socle property follows from them. These statements will be proved in the subsequent sections and the proof of the last proposition is the hardest one. 

In Section~\ref{section: DR-cycles} we introduce the main tool for proving these statements -- double ramification cycles. 

Section \ref{section: technical lemmas} contains several linear algebra lemmas that simplify the proofs of Propositions \ref{proposition: first step} and \ref{proposition: second step}. 

In Section~\ref{section: main proofs} we prove Lemma~\ref{lemma: symmetry}, Proposition~\ref{proposition: first step} and Proposition~\ref{proposition: second step}.


\section{Generalized top intersection}\label{section: generalized top intersection}

In this section we show that the classes $\psi_i^{g-1}$ are linearly independent in $R^{g-1}(\mcM_{g,n})$ and, thus, $\dim R^{g-1}(\mcM_{g,n}) \geq n$. Then we prove the generalized  top intersection property assuming the equality 
$\dim R^{g-1}(\mcM_{g,n}) = n$. This equality will be proved in the subsequent sections.

\begin{proposition}
The classes $\alpha_s:=\lambda_g \lambda_{g-1} \psi_1 \cdots \widehat{\psi_s} \cdots \psi_n$ (where the hat means a missing factor) vanish on the boundary of $\oM_{g,n}$.
\end{proposition}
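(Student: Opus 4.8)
The plan is to show that the pullback of $\alpha_s$ under every boundary gluing map is zero; this is the standard meaning of vanishing on $\partial\oM_{g,n}$ in this context, since the gluing maps are finite and jointly surjective onto the boundary, so their pullbacks detect the restriction to $\partial\oM_{g,n}$. There are two kinds of gluing maps: the non-separating map $\xi_{\mathrm{irr}}\colon\oM_{g-1,n+2}\to\oM_{g,n}$ with image $\delta_{\mathrm{irr}}$, and the separating maps $\xi_{h,S}\colon \oM_{h,S\cup\{\bullet\}}\times\oM_{g-h,S^c\cup\{\star\}}\to\oM_{g,n}$ with image $\delta_{h,S}$. On each factor I will use the splitting of the Hodge bundle $\mathbb{E}$ and Mumford's relation $c(\mathbb{E})c(\mathbb{E}^\vee)=1$, whose top-degree part gives $\lambda_h^2=0$ on $\oM_{h,\ast}$ for every $h\ge 1$.

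For the non-separating map, the normalization exact sequence for the Hodge bundle shows that $\xi_{\mathrm{irr}}^*\mathbb{E}$ has a trivial quotient line bundle (the residue at the node), so its top Chern class vanishes: $\xi_{\mathrm{irr}}^*\lambda_g=0$, and therefore $\xi_{\mathrm{irr}}^*\alpha_s=0$ immediately. For a separating map with $1\le h\le g-1$ (both components of positive genus), the Hodge bundle splits as $\mathbb{E}=\mathbb{E}_1\oplus\mathbb{E}_2$, and the Whitney formula gives $\xi_{h,S}^*\lambda_g=\lambda_h^{(1)}\lambda_{g-h}^{(2)}$ together with $\xi_{h,S}^*\lambda_{g-1}=\lambda_{h-1}^{(1)}\lambda_{g-h}^{(2)}+\lambda_h^{(1)}\lambda_{g-1-h}^{(2)}$, where superscripts indicate the factor. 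Multiplying, every term of $\xi_{h,S}^*(\lambda_g\lambda_{g-1})$ contains either $(\lambda_h^{(1)})^2$ or $(\lambda_{g-h}^{(2)})^2$, both of which vanish since $h\ge 1$ and $g-h\ge 1$; hence $\xi_{h,S}^*\alpha_s=0$.

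The remaining, and genuinely different, case is a separating map with a rational bubble, i.e. $h=0$ or $h=g$. Here the $\lambda$-classes do not help: on a genus-$0$ component the Hodge summand is trivial, so $\lambda_g\lambda_{g-1}$ simply pulls back to the corresponding product on the positive-genus factor, which is nonzero in general. This is where the $\psi$-factors of $\alpha_s$ are essential. Let the bubble carry the marked set $T$ (so $T=S$ if $h=0$ and $T=S^c$ if $h=g$), with $|T|\ge 2$ by stability; its moduli factor is $\oM_{0,|T|+1}$, of dimension $|T|-2$. Since $i\in T$ is a genuine marked point (not the node), $\xi^*\psi_i=\psi_i$ on this rational factor with no correction term, and the number of indices $i\in T$ with $i\ne s$ is at least $|T|-1$. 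Thus $\xi^*\prod_{i\ne s}\psi_i$ contains a product of at least $|T|-1$ cotangent classes on $\oM_{0,|T|+1}$, a class of degree at least $|T|-1>|T|-2=\dim\oM_{0,|T|+1}$, hence zero. Therefore $\xi^*\alpha_s=0$ in this case as well, completing the argument.

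The main obstacle is precisely this last case: the vanishing cannot be supplied by the Hodge classes and must instead come from a dimension count on the rational bubble, which is exactly why the specific shape of $\alpha_s$ — carrying all but one of the $\psi$-classes — is what makes the statement true. A secondary point worth making precise is the passage from ``$\xi^*\alpha_s=0$ for all gluing maps'' to genuine vanishing of $\alpha_s$ on $\partial\oM_{g,n}$; this is the standard normalization/transfer argument for the boundary divisors and is the form of the statement actually used in the socle argument that follows.
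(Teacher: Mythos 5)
Your proposal is correct, and it reaches the conclusion by a more self-contained route than the paper. The paper disposes of all boundary components not of rational-tails type in one stroke by citing \cite{Fab97}, Lemma~1: $\lambda_g\lambda_{g-1}$ vanishes on $\oM_{g,n}\setminus\mcM^{\rm rt}_{g,n}$; it then treats the remaining boundary divisors, which are products $\mcM^{\rm rt}_{g,n_1+1}\times\oM_{0,n_2+1}$, by exactly the dimension count you give (at least $n_2-1$ of the surviving $\psi$-classes land on a factor of dimension $n_2-2$). You instead check each gluing map directly, in effect reproving the cited lemma at the level of divisor normalizations: the residue exact sequence kills $\lambda_g$ on the non-separating divisor, and the Hodge-bundle splitting plus Mumford's relation $\lambda_h^2=0$ kills $\lambda_g\lambda_{g-1}$ on separating divisors with both genera positive; your rational-bubble case (with $|T|\ge 2$) is the paper's case $n_2\ge 2$. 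What your version buys is transparency about where each factor of $\alpha_s$ is used ($\lambda_g$ alone handles $\delta_{\rm irr}$, the product $\lambda_g\lambda_{g-1}$ handles mixed separating divisors, and the $\psi$-classes are needed only on rational bubbles) and independence from \cite{Fab97}; what the paper's buys is brevity, and the cited statement is stronger (vanishing on the entire closed locus $\oM_{g,n}\setminus\mcM^{\rm rt}_{g,n}$, not just pullbacks to divisor normalizations), so it needs no enumeration of divisor types. Both arguments share the same essential point --- the dimension count on the rational factor --- and both leave implicit the same standard reduction, which you at least flag explicitly: that vanishing of $\xi^*\alpha_s$ for every gluing map $\xi$ is the form of ``vanishing on the boundary'' actually used afterwards, namely that $\int_{\oM_{g,n}}\alpha_s\cdot\beta$ is unchanged when $\beta$ is modified by a class pushed forward from the boundary, so that $\alpha_s$ gives a well-defined linear form on $R^{g-1}(\mcM_{g,n})$.
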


\begin{proof}
It is well-known (see~\cite{Fab97}, Lemma~1) that $\lambda_g \lambda_{g-1}$ vanishes on $\oM_{g,n} \setminus \mcM_{g,n}^{\rm rt}$, where $\mcM_{g,n}^{\rm rt}$ is the space of stable curves with one genus~$g$ component and possibly several ``rational tails'' composed of genus~0 components. Thus, it remains to show that the classes~$\alpha_s$ also vanish on $\mcM_{g,n}^{\rm rt} \setminus \mcM_{g,n}$. Every boundary divisor in $\mcM_{g,n}^{\rm rt}$ is isomorphic to a product $\mcM^{\rm rt}_{g,n_1+1} \times \oM_{0,n_2+1}$, where $n_1+n_2=n$. Among the $\psi$-classes that make part of $\alpha_s$, at least $n_2-1$ are sitting on the second factor. Since the dimension of $\oM_{0,n_2+1}$ equals $n_2-2$, we see that the class $\alpha_s$ vanishes on our boundary divisor for dimension reasons.
\end{proof} 

Let $\pi\colon\mcM^{\rm rt}_{g,N}\to\mcM_g$ be the forgetful map. Let $l_1,l_2,\ldots,l_N$ be non-negative integers such that $l_1+l_2+\ldots+l_N=g-2$. Recall that Faber's top intersection conjecture says that
\begin{gather}\label{eq: faber's formula}
\pi_*(\psi_1^{l_1+1}\ldots\psi_N^{l_N+1})=\frac{(2g-3+N)!(2g-3)!!}{(2g-2)!\prod_{i=1}^N (2l_i+1)!!}
\kappa_{g-2}.
\end{gather}
The following small generalization of this formula will be useful for us. Define $(-1)!!:=1$.
\begin{lemma}\label{lemma: auxiliary}
Let $l_1,\ldots,l_N$ be integers such that $l_1+\ldots+l_N=g-2$. Suppose that at most one of $l_1,\ldots,l_N$ is equal to $-1$ and the others are non-negative. Then formula~\eqref{eq: faber's formula} holds.   
\end{lemma}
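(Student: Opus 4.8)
The plan is to treat the genuinely new case, in which exactly one exponent equals $-1$; if all $l_i\ge 0$ the statement is precisely \eqref{eq: faber's formula}, so there is nothing to prove. Since both sides of the asserted identity are symmetric under permuting the marked points (together with the $l_i$), I may assume $l_N=-1$, so that $\psi_N^{l_N+1}=\psi_N^0=1$ and the $N$-th point carries no $\psi$-class. I would then factor the forgetful map as $\pi=\pi'\circ p$, where $p\colon\mcM^{\rm rt}_{g,N}\to\mcM^{\rm rt}_{g,N-1}$ forgets the last point and $\pi'\colon\mcM^{\rm rt}_{g,N-1}\to\mcM_g$ forgets the rest, and compute $\pi_*=\pi'_*\circ p_*$ in two stages.

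The core computation is the partial pushforward $q:=p_*\bigl(\prod_{i=1}^{N-1}\psi_i^{l_i+1}\bigr)$. Here I would use the comparison formula $\psi_i=p^*\o\psi_i+D_{iN}$ on $\mcM^{\rm rt}_{g,N}$, together with the standard relations $D_{iN}D_{jN}=0$ for $i\ne j$, $\sigma_j^*D_{jN}=-\o\psi_j$ (so that $D_{jN}^k=(\sigma_j)_*(-\o\psi_j)^{k-1}$), $p\circ\sigma_j=\mathrm{id}$, and $p_*1=0$, where $\sigma_j$ is the $j$-th section. Expanding each $\psi_i^{l_i+1}$, only monomials involving at most one divisor $D_{jN}$ survive, the purely pulled-back term dies under $p_*1=0$, and the projection formula reduces every $D_{jN}$-term to a class on $\mcM^{\rm rt}_{g,N-1}$. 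The binomial coefficients assemble into $\sum_{k\ge1}\binom{l_j+1}{k}(-1)^{k-1}=1$, and I expect the clean outcome
\[
q=\sum_{j=1}^{N-1}\o\psi_j^{\,l_j}\prod_{i\ne j}\o\psi_i^{\,l_i+1}.
\]

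It then remains to apply $\pi'_*$ to each summand. For an index $j$ with $l_j\ge 1$ the monomial has all exponents positive and I would invoke the original formula \eqref{eq: faber's formula} on $\mcM^{\rm rt}_{g,N-1}$ directly; for an index $j$ with $l_j=0$ the point $j$ drops out of the monomial, so this summand is again an instance of the present lemma but with $N-1$ marked points, handled by induction on $N$ (the recursion strictly decreases $N$ and bottoms out at the all-nonnegative case). Using $(2l_j-1)!!=(2l_j+1)!!/(2l_j+1)$, both cases yield the same coefficient $\tfrac{(2g-4+N)!\,(2g-3)!!}{(2g-2)!\,\prod_{i=1}^{N-1}(2l_i+1)!!}(2l_j+1)$ times $\kappa_{g-2}$, so that
\[
\pi_*\Bigl(\prod_{i=1}^{N-1}\psi_i^{l_i+1}\Bigr)=\frac{(2g-4+N)!\,(2g-3)!!}{(2g-2)!\,\prod_{i=1}^N(2l_i+1)!!}\Bigl(\sum_{j=1}^{N-1}(2l_j+1)\Bigr)\kappa_{g-2}.
\]
The convention $(-1)!!=1$ makes the $i=N$ factor in $\prod_{i=1}^N(2l_i+1)!!$ trivial, exactly matching the target denominator.

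Finally I would evaluate $\sum_{j=1}^{N-1}(2l_j+1)=2(g-1)+(N-1)=2g-3+N$ (using $\sum_{j<N}l_j=g-1$), which upgrades $(2g-4+N)!$ to $(2g-3+N)!$ and reproduces the claimed right-hand side. The main obstacle is the bookkeeping in the first stage: correctly handling the self-intersections $D_{jN}^k$ through the section $\sigma_j$ and verifying that the alternating binomial sum collapses to $1$, and—just as importantly—recognizing that the $l_j=0$ summands are genuinely the lemma at one fewer marked point. This last observation is what makes the induction close and what forces the $(-1)!!=1$ convention to appear in the statement.
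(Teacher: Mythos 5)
Your proof is correct and takes essentially the same route as the paper: induction on $N$, pushing forward along the map that forgets the marked point carrying exponent $-1$, and then applying Faber's formula (when $l_j\ge 1$) or the induction hypothesis (when $l_j=0$) to each summand, with the $(-1)!!=1$ convention making the coefficients match. The only difference is cosmetic: your first stage, expanding $\psi_i=p^*\overline\psi_i+D_{iN}$ and collapsing the binomial sums, is a from-scratch derivation of the string equation, which the paper simply invokes.
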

\begin{proof}
If all $l_i$'s are non-negative, then equation~\eqref{eq: faber's formula} is exactly Faber's top intersection conjecture.

Suppose one of $l_i$'s is equal to $-1$. We proceed by induction on $N$. If $N=1$, then $l_1=g-2\ge 0$, so the formula is true. Suppose~$N\ge 2$. Without loss of generality we can assume that $l_1=-1$. Using the string equation and the induction assumption, we get
\begin{multline*}
\pi_*(\psi_2^{l_2+1}\ldots\psi_N^{l_N+1})=\sum_{i=2}^N\frac{(2g-4+N)!(2g-3)!!}{(2g-2)!(2l_i-1)!!\prod_{\substack{2\le j\le N\\j\ne i}} (2l_j+1)!!}\kappa_{g-2}=\\
=\frac{(2g-3+N)!(2g-3)!!}{(2g-2)!\prod_{i=1}^N (2l_i+1)!!}\kappa_{g-2}.
\end{multline*}
The lemma is proved.
\end{proof}

Let $d_1,\dots,d_n$ and $k_1, \dots, k_m$ be nonnegative integers. Assume that $\sum d_i + \sum k_i = g-1$. These integers will be fixed for the rest of the section. Denote
$$
C:=\frac{(2g-3+n+m)! \, (2g-3)!!}
{(2g-2)! \, \prod_{i=1}^n (2d_i +1)!! \, \prod_{j=1}^m (2k_j+1)!!}.
$$

\begin{lemma} \label{Lem:pushforward}
Let $\pi\colon\mcM^{\rm rt}_{g,n} \to \mcM_g$ be the forgetful map. Then in $R^{g-2}(\mcM_g)$ we have
$$
\pi_* \left(
\psi_1^{d_1+1} \cdots \psi_s^{d_s} \cdots \psi_n^{d_n+1} \kappa_{k_1, \dots, k_m}
\right)
= C \cdot (2d_s+1) \cdot
\kappa_{g-2}.
$$
\end{lemma}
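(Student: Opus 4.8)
The plan is to turn the multi-index $\kappa$-class into $\psi$-classes on a larger moduli space, reduce the whole left-hand side to a single pushforward to $\mcM_g$ of a monomial in $\psi$-classes, and then apply Lemma~\ref{lemma: auxiliary} directly. Introduce the forgetful map $p\colon \mcM^{\rm rt}_{g,n+m}\to\mcM^{\rm rt}_{g,n}$ forgetting the last $m$ points, so that by definition $\kappa_{k_1,\dots,k_m}=p_*(\psi_{n+1}^{k_1+1}\cdots\psi_{n+m}^{k_m+1})$, and set $\pi'=\pi\circ p\colon\mcM^{\rm rt}_{g,n+m}\to\mcM_g$. The target of this first step is the identity
$$
\pi_*\bigl(\psi_1^{d_1+1}\cdots\psi_s^{d_s}\cdots\psi_n^{d_n+1}\,\kappa_{k_1,\dots,k_m}\bigr)
=\pi'_*\bigl(\psi_1^{d_1+1}\cdots\psi_s^{d_s}\cdots\psi_n^{d_n+1}\,\psi_{n+1}^{k_1+1}\cdots\psi_{n+m}^{k_m+1}\bigr),
$$
where on the right all $\psi$-classes are taken on $\mcM^{\rm rt}_{g,n+m}$. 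Since $\pi'_*=\pi_*p_*$, this reduces to showing that the first $n$ factors pull out of $p_*$, i.e. that $p_*$ of the big monomial equals $\psi_1^{d_1+1}\cdots\psi_s^{d_s}\cdots\psi_n^{d_n+1}\cdot\kappa_{k_1,\dots,k_m}$.

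To establish this I would forget the $m$ extra points one at a time, using for a single forgetful map $q$ (forgetting a point $b$) the standard comparison $\psi_a=q^*\psi_a+D_{a,b}$ together with $\psi_b\cdot D_{a,b}=0$; all the relevant bubbles are rational, so everything stays within the rational-tails locus. At each stage the point being forgotten carries a $\psi$-power of exponent at least one, so every correction $D_{a,b}$ to a surviving class is annihilated and the surviving classes may be replaced by their pullbacks. The projection formula then pulls the marked-point factors $\psi_1,\dots,\psi_n$ outside the pushforward, while $\psi_{n+1}^{k_1+1}\cdots\psi_{n+m}^{k_m+1}$ pushes forward to $\kappa_{k_1,\dots,k_m}$ by definition. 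The $\kappa$-comparison cross-terms that appear when forgetting the intermediate points are precisely what assemble the iterated pushforward into the multi-index class, and they never involve the factors $\psi_1,\dots,\psi_n$.

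It then remains to apply Lemma~\ref{lemma: auxiliary} to the resulting monomial on $\mcM^{\rm rt}_{g,n+m}$ with $N=n+m$ and exponents $l_i=d_i$ for $i\ne s$, $l_s=d_s-1$, and $l_{n+j}=k_j$. These sum to $\bigl(\sum d_i+\sum k_j\bigr)-1=g-2$, and at most one of them --- namely $l_s$, and only when $d_s=0$ --- equals $-1$, so the hypotheses of the lemma are met and the pushforward is a multiple of $\kappa_{g-2}$ (recall $R^{g-2}(\mcM_g)=\mbQ\,\kappa_{g-2}$). Since $(2(d_s-1)+1)!!=(2d_s-1)!!=(2d_s+1)!!/(2d_s+1)$, using $(-1)!!:=1$ when $d_s=0$, the double-factorial product in formula~\eqref{eq: faber's formula} differs from the one in the definition of $C$ by exactly the factor $1/(2d_s+1)$, which produces the coefficient $C\cdot(2d_s+1)$ in front of $\kappa_{g-2}$.

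I expect the main obstacle to be the first step, namely the bookkeeping of the $\psi$- and $\kappa$-comparison terms under the iterated forgetful maps: one must verify that none of the boundary corrections survive against the remaining $\psi$-powers and that the intermediate $\kappa$-comparison terms reassemble correctly into $\kappa_{k_1,\dots,k_m}$ without touching the factors $\psi_1,\dots,\psi_n$. Once the quantity is rewritten as a single pushforward on $\mcM^{\rm rt}_{g,n+m}$, the remainder is a direct appeal to Lemma~\ref{lemma: auxiliary} and the elementary double-factorial simplification above.
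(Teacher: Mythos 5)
Your proposal is correct and follows essentially the same route as the paper: rewrite the left-hand side as $\pi'_*$ of a pure $\psi$-monomial on $\mcM^{\rm rt}_{g,n+m}$ and then apply Lemma~\ref{lemma: auxiliary}, whose allowance for a single exponent equal to $-1$ covers the case $d_s=0$, with the double-factorial identity $(2d_s+1)!!/(2d_s-1)!!=2d_s+1$ producing the coefficient $C\cdot(2d_s+1)$. The only difference is that you spell out the comparison-term bookkeeping behind the first equality, which the paper simply asserts as standard.
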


\begin{proof}
Let $\pi'\colon\mcM_{g,n+m}^{\rm rt}\to\mcM_g$ be the forgetful map. We have
\begin{multline*}
\pi_* \left(
\psi_1^{d_1+1} \cdots \psi_s^{d_s} \cdots \psi_n^{d_n+1} \kappa_{k_1, \dots, k_m}
\right)
= \pi'_* \left(\psi_1^{d_1+1} \cdots \psi_s^{d_s} \cdots \psi_n^{d_n+1} \psi_{n+1}^{k_1+1} \cdots \psi_{n+m}^{k_m+1}
\right)=\\
\stackrel{\text{by Lemma \ref{lemma: auxiliary}}}{=}C \cdot (2d_s+1) \cdot
\kappa_{g-2}.
\end{multline*}
\end{proof}

Denote by $A_g$ the nonzero intersection number
$$
A_g:=\int\limits_{\oM_g} \kappa_{g-2} \lambda_g \lambda_{g-1} = 
\frac{(-1)^{g-1}B_{2g}(g-1)!}{2^g (2g)!},
$$
where $B_{2g}$ is the Bernoulli number (see~\cite{Fab97}, Lemma~2).

\begin{corollary} \label{Cor:integral}
We have $\int_{\oM_{g,n}} \prod_{i=1}^n \psi_i^{d_i}
\kappa_{k_1, \dots, k_m}  \cdot \alpha_s = 
C\cdot (2d_s+1) \cdot  A_g$.
\end{corollary}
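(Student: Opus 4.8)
The plan is to recognize this integral as exactly the pushforward computed in Lemma~\ref{Lem:pushforward}, multiplied by $\lambda_g\lambda_{g-1}$, and then to evaluate the resulting top intersection on $\oM_g$. First I would absorb the $\psi$-factors of $\alpha_s$. Since $\alpha_s=\lambda_g\lambda_{g-1}\,\psi_1\cdots\widehat{\psi_s}\cdots\psi_n$, multiplying by $\prod_{i=1}^n\psi_i^{d_i}$ raises each exponent with $i\ne s$ by one while leaving the $s$-th factor as $\psi_s^{d_s}$. Writing $P:=\psi_1^{d_1+1}\cdots\psi_s^{d_s}\cdots\psi_n^{d_n+1}\kappa_{k_1,\dots,k_m}$, the integrand becomes precisely $P\cdot\lambda_g\lambda_{g-1}$, where $P$ is the class whose pushforward is given by Lemma~\ref{Lem:pushforward}.

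Next I would transport the computation to $\mcM_g$ via the projection formula. The classes $\lambda_g,\lambda_{g-1}$ are pulled back from $\mcM_g$ along the forgetful map, and by Faber's Lemma~1 (\cite{Fab97}) the product $\lambda_g\lambda_{g-1}$ vanishes on $\oM_{g,n}\setminus\mcM_{g,n}^{\rm rt}$ (this is exactly what the Proposition at the start of this section also exploits). Hence the integral over $\oM_{g,n}$ may be computed over the rational-tails locus, on which $\pi\colon\mcM_{g,n}^{\rm rt}\to\mcM_g$ is proper and $\lambda_g\lambda_{g-1}=\pi^*(\lambda_g\lambda_{g-1})$. The projection formula then gives
$$
\int_{\oM_{g,n}} P\cdot\lambda_g\lambda_{g-1}
=\int_{\mcM_g}\pi_*(P)\cdot\lambda_g\lambda_{g-1}.
$$
Applying Lemma~\ref{Lem:pushforward}, which states $\pi_*(P)=C\cdot(2d_s+1)\cdot\kappa_{g-2}$, and then using that $\lambda_g\lambda_{g-1}$ likewise vanishes on $\oM_g\setminus\mcM_g$ so that $\int_{\mcM_g}\kappa_{g-2}\lambda_g\lambda_{g-1}=\int_{\oM_g}\kappa_{g-2}\lambda_g\lambda_{g-1}=A_g$, I obtain $C\cdot(2d_s+1)\cdot A_g$, which is the claim.

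The only delicate point is the reduction from $\oM_{g,n}$ to $\mcM_{g,n}^{\rm rt}$: one must be sure that multiplying by $\lambda_g\lambda_{g-1}$ annihilates every contribution coming from boundary strata outside the rational-tails locus, and that the forgetful map is proper on $\mcM_{g,n}^{\rm rt}$ so that the projection formula applies with the stated pushforward. Both are standard consequences of Faber's Lemma~1 and the structure of the rational-tails compactification, so once they are invoked the corollary follows immediately from Lemma~\ref{Lem:pushforward} and the definition of $A_g$; no further calculation is needed.
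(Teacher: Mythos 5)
Your proof is correct and takes essentially the same route as the paper's own (one-line) proof: project to $\oM_g$ along the forgetful map, use the projection formula together with the boundary vanishing of $\lambda_g\lambda_{g-1}$ to reduce to the rational-tails pushforward computed in Lemma~\ref{Lem:pushforward}, and evaluate against $A_g$. The paper compresses this into ``compute the integral by first projecting on $\oM_g$ and use Lemma~\ref{Lem:pushforward}''; your write-up simply makes the implicit steps (absorbing the $\psi$-factors, the projection formula, and the justification that boundary contributions are killed by $\lambda_g\lambda_{g-1}$) explicit.
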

\begin{proof}
Compute the integral by first projecting on $\oM_g$ and use Lemma~\ref{Lem:pushforward}.
\end{proof}

\begin{proposition}
The $n \times n$ matrix $M_{is}:= \int_{\oM_{g,n}} \psi_i^{g-1} \alpha_s$ is non-degenerate.
\end{proposition}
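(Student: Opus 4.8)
The plan is to compute every entry of $M$ explicitly using Corollary~\ref{Cor:integral} and then reduce the non-degeneracy to an elementary linear algebra computation. To obtain $M_{is}=\int_{\oM_{g,n}}\psi_i^{g-1}\alpha_s$ I would apply the corollary with no $\kappa$-factors, i.e. $m=0$, and with $\psi$-exponents $d_a=(g-1)\delta_{ai}$, so that $d_1+\cdots+d_n=g-1$ as required and $\prod_a\psi_a^{d_a}=\psi_i^{g-1}$. With these choices the constant becomes
$$
C=\frac{(2g-3+n)!\,(2g-3)!!}{(2g-2)!\,(2g-1)!!},
$$
which crucially does \emph{not} depend on $i$ or $s$, because the only nontrivial double factorial in the denominator is $(2d_i+1)!!=(2g-1)!!$.

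The factor $(2d_s+1)$ appearing in the corollary is the exponent of the \emph{removed} index $s$, so here $d_s=(g-1)\delta_{is}$ and hence
$$
M_{is}=C\,A_g\,\bigl(2(g-1)\delta_{is}+1\bigr).
$$
In other words $M=C\,A_g\cdot\bigl((2g-2)\,\Id+J\bigr)$, where $J$ is the $n\times n$ all-ones matrix: the diagonal entries equal $C A_g(2g-1)$ and the off-diagonal entries equal $C A_g$. The eigenvalues of $(2g-2)\,\Id+J$ are $2g-2$ with multiplicity $n-1$ and $2g-2+n$ with multiplicity one, so its determinant is $(2g-2)^{n-1}(2g-2+n)$.

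It then remains only to check that no factor vanishes. The constant $C$ is a ratio of positive factorials and double factorials, hence $C\ne 0$; the number $A_g$ is nonzero by its stated value (a nonvanishing Bernoulli number times nonzero factors); and for $g\ge 2$ both $2g-2$ and $2g-2+n$ are strictly positive. Therefore
$$
\det M=(C A_g)^n\,(2g-2)^{n-1}(2g-2+n)\ne 0,
$$
so $M$ is non-degenerate. The only real content is the bookkeeping in the first step: one must verify that the choice $m=0$, $d_a=(g-1)\delta_{ai}$ makes the constant $C$ independent of the matrix indices, so that the matrix collapses to the scalar-plus-rank-one form $(2g-2)\,\Id+J$; after that the argument is immediate. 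I do not expect a serious obstacle, since Corollary~\ref{Cor:integral} has already absorbed all of the geometric input.
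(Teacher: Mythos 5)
Your proposal is correct and follows essentially the same route as the paper: both apply Corollary~\ref{Cor:integral} with $m=0$ and $d_a=(g-1)\delta_{ai}$ to get $M=C\,A_g\bigl((2g-2)\Id+U\bigr)$ (with $U$ the all-ones matrix) and then invoke the spectrum of $U$ to conclude non-degeneracy, using $g\ge 2$ and $A_g\ne 0$. The only cosmetic difference is that you write out the determinant $(CA_g)^n(2g-2)^{n-1}(2g-2+n)$ explicitly, whereas the paper just notes that $U+a\Id$ is invertible for $a\ne 0,-n$.
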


\begin{proof}
Denote by $U$ the $n \times n$ matrix given by $U_{is} = 1$ for all $i,s$. It has exactly two eigenvalues: 0 and $n$. Therefore $U+a\Id$ is non-degenerate whenever $a \ne 0, -n$.

According to the corollary, we have $M_{ii} = C \cdot A_g \cdot (2g-1)$ and $M_{is} = C \cdot A_g$, for~$i \ne s$. Thus,
$$
M = C \cdot A_g \cdot \left(U + (2g-2)\Id \right),
$$
so it is non-degenerate.
\end{proof}

\begin{proposition}
The classes 
$$
\prod_{i=1}^n \psi_i^{d_i} \kappa_{k_1, \dots, k_m}
$$ 
and
$$
C\frac{(2g-1)!}{(2g-2+n)!}\sum_{i=1}^n\frac{(2g-2+n)d_i + \sum k_j}{g-1}\psi_i^{g-1}
$$
have the same intersection number with every $\alpha_s$.
\end{proposition}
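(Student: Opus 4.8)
The plan is to evaluate both intersection numbers explicitly and observe that they coincide. For the first class no new computation is needed: its intersection number with $\alpha_s$ is exactly the content of Corollary~\ref{Cor:integral}, namely
$$
\int_{\oM_{g,n}} \prod_{i=1}^n \psi_i^{d_i}\,\kappa_{k_1,\dots,k_m}\cdot\alpha_s = C\,(2d_s+1)\,A_g.
$$
So everything reduces to computing the intersection number of the second class with $\alpha_s$ and checking that it also equals $C\,(2d_s+1)\,A_g$.

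For the second class I would expand it in the basis $\psi_i^{g-1}$ and insert the matrix entries $M_{is}=\int_{\oM_{g,n}}\psi_i^{g-1}\alpha_s$ determined in the preceding proposition. Write $S:=\sum_{i=1}^n d_i$ and $K:=\sum_{j=1}^m k_j$, so that $S+K=g-1$, and recall that the preceding proof gives $M_{is}=C_0\,A_g\,\bigl(1+(2g-2)\delta_{is}\bigr)$, where $C_0:=\frac{(2g-3+n)!\,(2g-3)!!}{(2g-2)!\,(2g-1)!!}$ is the constant $C$ evaluated at the parameters of $\psi_i^{g-1}$. Then the intersection number of the second class with $\alpha_s$ is
$$
C\,\frac{(2g-1)!}{(2g-2+n)!}\cdot\frac{C_0\,A_g}{g-1}\left[\sum_{i=1}^n\bigl((2g-2+n)d_i+K\bigr)+(2g-2)\bigl((2g-2+n)d_s+K\bigr)\right].
$$

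The only real work is to simplify the bracket, and this is where the cancellations carry the argument. The first sum contributes $(2g-2+n)S+nK$, so the bracket equals $(2g-2+n)S+(2g-2)(2g-2+n)d_s+(n+2g-2)K$. The point is the identity $n+(2g-2)=2g-2+n$, which allows $(2g-2+n)$ to be factored out, yielding $(2g-2+n)\bigl[S+K+(2g-2)d_s\bigr]$; substituting $S+K=g-1$ and $(2g-2)d_s=2(g-1)d_s$ turns this into $(2g-2+n)(g-1)(2d_s+1)$. Plugging back, the factor $(g-1)$ cancels, $\frac{2g-2+n}{(2g-2+n)!}=\frac{1}{(2g-3+n)!}$, and the leftover constant $C_0\,\frac{(2g-1)!}{(2g-3+n)!}$ telescopes to $1$ because $\frac{(2g-1)!}{(2g-2)!}=2g-1$ and $\frac{(2g-3)!!}{(2g-1)!!}=\frac{1}{2g-1}$. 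What survives is exactly $C\,(2d_s+1)\,A_g$.

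I do not expect any genuine obstacle: once Corollary~\ref{Cor:integral} and the values $M_{is}$ are available, the statement is a bookkeeping computation. The single conceptual step worth isolating is the cancellation above — the coincidence $n+(2g-2)=2g-2+n$ combined with $S+K=g-1$ — which collapses the weighted sum into the clean factor $(g-1)(2d_s+1)$ and makes the factorial constants telescope. The main thing to watch is simply keeping the double factorials and the $(2g-2+n)!$ term straight.
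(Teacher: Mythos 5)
Your proof is correct and follows essentially the same route as the paper: both reduce the claim to Corollary~\ref{Cor:integral} for the first class and to the matrix entries $M_{is}$ for the second, and both hinge on the same cancellation $\sum d_i+\sum k_j=g-1$ that collapses the weighted sum to $(2g-2+n)(g-1)(2d_s+1)$. Your only (welcome) refinement is making explicit that the constant in $M_{is}$ is $C_0$, the value of $C$ at the parameters of $\psi_i^{g-1}$, which the paper denotes by the same letter $C$ through a slight abuse of notation.
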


\begin{proof}
Apply Corollary~\ref{Cor:integral} and divide both intersection numbers by the common factor~$C A_g$. We obtain the following equality that needs to be checked:
\begin{align*}
2d_s+1 \stackrel{?}{=}& 
\frac{2g-1}{(2g-2+n)!}
\sum_{i=1}^n  \left[\frac{(2g-2+n) d_i + \sum k_j}{g-1} \cdot
\frac{(2g-3+n)!}{2g-1}(1 + (2g-2)\delta_{is})
\right]=\\
=&\frac1{(2g-2+n)(g-1)}
\sum_{i=1}^n \left((2g-2+n)d_i + \sum k_j \right)
(1 + (2g-2)\delta_{is})=\\
=&\frac1{(2g-2+n)(g-1)}
\left[ (2g-2+n) \left(\sum d_i + \sum k_j\right) + (2g-2+n)(2g-2) d_s
\right]=\\
=&\frac1{g-1} [(g-1) + (2g-2) d_s] = 2d_s+1.
\end{align*}
Thus, the equality is, indeed, right. 
\end{proof}

Let us sum up the results of our computations. We have found $n$ classes $\alpha_s$, $1 \leq s \leq n$, of degree $2g+n-2$ that vanish on the boundary of $\oM_{g,n}$ and, thus, can be used as linear forms on the group $R^{g-1}(\mcM_{g,n})$. We have proved that the intersection matrix of the classes~$\psi_i^{g-1}$ and $\alpha_s$ is non-degenerate and, therefore, $\dim R^{g-1} (\mcM_{g,n}) \geq n$. Finally, we have computed the intersection numbers of all tautological classes in $R^{g-1}(\mcM_{g,n})$ with the classes~$\alpha_s$. {\em Assuming} that $(\psi_i^{g-1})$ is a basis of $R^{g-1}(\mcM_{g,n})$, this allowed us to decompose any class in this basis, thus, proving the generalized top intersection property.


\section{Generalized socle property: three statements}\label{section: three statements}

In this section we formulate three statements and show how to use them for the proof of the generalized socle property. We will prove these statements in the next sections.

\subsection{Three statements}

Denote by $\pi_k\colon\mcM_{g,n}\to\mcM_{g,n-1}$ the forgetful map that forgets the $k$-th marked point. Let $i_{k,l}\colon\mcM_{g,n}\to\mcM_{g,n}$ be the map that interchanges the $k$-th and the $l$-th marked points.

Let $S^j_{k,l}(\mcM_{g,n})$ be the subspace of $R^j(\mcM_{g,n})$ defined by
$$
S^j_{k,l}(\mcM_{g,n}):=\{\alpha\in R^j(\mcM_{g,n})|i_{k,l}^*\alpha=\alpha\}.
$$

\begin{lemma}\label{lemma: symmetry}
Let $n\ge 2$ and $1\le k<l\le n$, then we have 
$$
S^{g-1}_{k,l}(\mcM_{g,n})\subset\pi_k^*(R^{g-1}(\mcM_{g,n-1}))+\pi_l^*(R^{g-1}(\mcM_{g,n-1})).
$$
\end{lemma}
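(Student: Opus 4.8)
The plan is to exploit the invariance under the transposition $i_{k,l}$ together with the structure of $R^*(\mcM_{g,n})$ as generated by $\psi$- and $\kappa$-classes. The key observation is that the $\kappa$-classes are symmetric in all marked points (they are pulled back from $\mcM_g$, or more precisely they do not involve the markings at all), so the only source of asymmetry in a tautological class comes from the $\psi_i$. Fix a spanning set of $R^{g-1}(\mcM_{g,n})$ by monomials of the form $\prod_i \psi_i^{d_i}\cdot\kappa_{k_1,\dots,k_m}$ with $\sum d_i+\sum k_j=g-1$. Acting by $i_{k,l}^*$ simply swaps the exponents $d_k$ and $d_l$, fixing everything else. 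Hence $S^{g-1}_{k,l}$ is spanned by the symmetrizations of these monomials under the swap of $d_k$ and $d_l$.

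Next I would split the analysis according to whether both $d_k$ and $d_l$ are positive. If either $d_k=0$ or $d_l=0$, then the monomial omits the corresponding $\psi$-class entirely; such a monomial is a pull-back under $\pi_k^*$ or $\pi_l^*$ of a monomial in $R^{g-1}(\mcM_{g,n-1})$ (for instance, a monomial with no $\psi_l$ factor is constant along the fibers of $\pi_l$ in the relevant sense, so it lies in $\pi_l^* R^{g-1}(\mcM_{g,n-1})$), which is exactly what we want. The genuinely problematic terms are the symmetric combinations $\psi_k^{a}\psi_l^{b}M+\psi_k^{b}\psi_l^{a}M$ with $a,b\ge 1$ and $a\neq b$, where $M$ is the remaining symmetric monomial; these a priori involve both $\psi_k$ and $\psi_l$ and so lie in neither pull-back summand term by term.

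The main obstacle, therefore, is to show that these mixed symmetric combinations can be rewritten, modulo the relations in $R^{g-1}(\mcM_{g,n})$, as a sum of classes lying in $\pi_k^*(R^{g-1}(\mcM_{g,n-1}))+\pi_l^*(R^{g-1}(\mcM_{g,n-1}))$. Here I expect to use the dimension/degree constraint crucially: we are in the top nonvanishing degree $g-1$, and the vanishing $R^{>g-1}(\mcM_{g,n})=0$ (cited from \cite{Ion02}) together with excess/pushforward relations should force these combinations to degenerate. The natural mechanism is the comparison of $\psi_k$ on $\mcM_{g,n}$ with $\pi_k^*\psi_k$ on $\mcM_{g,n-1}$: one has the standard comparison formula relating $\psi_i$ to its pullback plus a boundary/diagonal correction term, and on the open part $\mcM_{g,n}$ (no boundary) this correction simplifies. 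Applying such comparison relations to reduce one of the two exponents $a,b$ in each mixed term, and iterating, should express every element of $S^{g-1}_{k,l}$ in terms of monomials missing either $\psi_k$ or $\psi_l$, completing the inclusion.

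In carrying this out I would need to handle carefully the interaction between the $\psi$-comparison and the $\kappa$-classes under the forgetful maps $\pi_k,\pi_l$, since $\kappa_{k_1,\dots,k_m}$ on $\mcM_{g,n}$ is not literally equal to its counterpart on $\mcM_{g,n-1}$ but differs by a correction coming from the forgotten point. The hardest and most delicate step will thus be the bookkeeping that shows all such correction terms either vanish in degree $g-1$ or themselves lie in the desired sum of pull-backs, so that the symmetric mixed monomials are genuinely absorbed into $\pi_k^*(R^{g-1}(\mcM_{g,n-1}))+\pi_l^*(R^{g-1}(\mcM_{g,n-1}))$.
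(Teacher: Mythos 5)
Your proposal has a genuine gap at its central step: the mechanism you propose for handling the mixed symmetric terms $\psi_k^a\psi_l^b M+\psi_k^b\psi_l^a M$ does not exist. On the open moduli space the comparison formula for $\psi$-classes under a forgetful map degenerates to the identity $\psi_i=\pi_k^*\psi_i$ for $i\ne k$ (the boundary correction $D_{ik}$ restricts to zero on $\mcM_{g,n}$); this identifies classes with their pullbacks but produces no relations whatsoever, and in particular gives no way to lower the exponents $a,b$. Likewise, Ionel's vanishing $R^{>g-1}(\mcM_{g,n})=0$ supplies relations only in degree $\ge g$, and there is no formal way to convert those into degree $g-1$ relations on the open part. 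To see that formal bookkeeping cannot suffice, take $n=2$: there the lemma asserts that every symmetric class, for instance $\psi_1\psi_2\kappa_{g-3}$, is a linear combination of $\psi_2^{g-1}=\pi_1^*\psi_2^{g-1}$ and $\psi_1^{g-1}=\pi_2^*\psi_1^{g-1}$. This is essentially a piece of the socle property for $\mcM_{g,2}$ itself --- a genuinely geometric fact --- and it is exactly the kind of relation your reduction would have to produce out of nothing. A secondary but real error: your ``easy'' case is already wrong as stated, since a monomial with no $\psi_l$ factor is \emph{not} in $\pi_l^*R^{g-1}(\mcM_{g,n-1})$; on $\mcM_{g,n}$ one has $\kappa_a=\pi_l^*\kappa_a+\psi_l^a$, so e.g. $\psi_k^{g-1-a}\kappa_a=\pi_l^*\bigl(\psi_k^{g-1-a}\kappa_a\bigr)+\psi_k^{g-1-a}\psi_l^a$, and the correction is precisely a mixed term. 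Thus even your easy case folds back into the hard case, for which you have no tool.

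The paper avoids $\psi$--$\kappa$ monomials entirely and this is where the real content lies. It first reduces to showing $\alpha+i_{1,2}^*\alpha\in\pi_1^*(R^{g-1}(\mcM_{g,n-1}))+\pi_2^*(R^{g-1}(\mcM_{g,n-1}))$ for all $\alpha$ (your symmetrization step is the same reduction), but then invokes Lemma~\ref{lemma: DR-cycles span the ring} to take $\alpha$ to be a double ramification cycle $DR_g\bigl(\prod_{i=1}^n m_{a_i}\m_b\bigr)$, and applies the running-point relation of Lemma~\ref{lemma: running point} three times, with a suitable linear combination yielding exactly $\alpha+i_{1,2}^*\alpha$ modulo the two pullback subspaces. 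The running-point relation is the nontrivial geometric input, coming from the geometry of rubber stable maps and the intersection formulas of \cite{BSSZ12}; some replacement for it (or for Hain-type formulas) is indispensable, and your proposal contains no such ingredient.
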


\begin{proposition}\label{proposition: first step}
For any $n\ge 1$, we have
$$
R^{g-1}(\mcM_{g,n})=\pi_1^*(R^{g-1}(\mcM_{g,n-1}))+\psi_1\pi_1^*(R^{g-2}(\mcM_{g,n-1})).
$$
\end{proposition}

\begin{proposition}\label{proposition: second step}
For any $n\ge 1$, we have
$$
R^{g-2}(\mcM_{g,n})=\pi_1^*(R^{g-2}(\mcM_{g,n-1}))+\psi_1\pi_1^*(R^{g-3}(\mcM_{g,n-1}))+\sum_{1\le k<l\le n}S^{g-2}_{k,l}(\mcM_{g,n}).
$$
\end{proposition}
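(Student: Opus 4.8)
The plan is to use the forgetful map $\pi_1\colon\mcM_{g,n}\to\mcM_{g,n-1}$ to reduce everything to a question about high powers of $\psi_1$, and then to peel those off by averaging over the other marked points. First I would record the comparison formulas on the open moduli space: for $j\ge 2$ one has $\psi_j=\pi_1^*\psi_j$ (the boundary correction term vanishes on $\mcM_{g,n}$), while $\kappa_b=\pi_1^*\kappa_b+\psi_1^b$. Since $R^*(\mcM_{g,n})$ is generated by the $\psi_j$ and the $\kappa_b$, these two identities show that every class is a polynomial in $\psi_1$ whose coefficients are pulled back from $\mcM_{g,n-1}$; in degree $g-2$ this gives the spanning
$$
R^{g-2}(\mcM_{g,n})=\sum_{a\ge 0}\psi_1^a\,\pi_1^*\big(R^{g-2-a}(\mcM_{g,n-1})\big).
$$
The terms with $a=0$ and $a=1$ are exactly the first two summands in the statement, so it remains to place each generator $\psi_1^a\,\pi_1^*\beta$ with $a\ge 2$ into the claimed sum.

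Next I would extract the symmetric summand by averaging over a transposition. For $\beta\in R^{g-2-a}(\mcM_{g,n-1})$ set $\mu:=\psi_1^a\,\pi_1^*\beta$. Since $\pi_1\circ i_{1,2}=\pi_2$, we have $i_{1,2}^*\mu=\psi_2^a\,\pi_2^*\beta$ (after the evident relabeling of the points of $\beta$), and the average $\mu+i_{1,2}^*\mu$ is $i_{1,2}$-invariant, hence lies in $S^{g-2}_{1,2}(\mcM_{g,n})$. Thus, modulo the symmetric summand, $\mu$ is congruent to $-\psi_2^a\,\pi_2^*\beta$: the transposition merely trades a high power of $\psi_1$ for the same high power of $\psi_2$. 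By itself this move decreases nothing, which is precisely why genuine relations in the tautological ring are unavoidable.

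It is here that I expect the main obstacle, and where the double ramification cycles of the later sections must enter. One clean way to organize the reduction is to bootstrap from Proposition~\ref{proposition: first step}: given $x\in R^{g-2}(\mcM_{g,n})$, the class $\psi_1 x$ lies in $R^{g-1}(\mcM_{g,n})$, so we may write $\psi_1 x=\pi_1^*\gamma+\psi_1\,\pi_1^*\delta$; subtracting the allowed term $\pi_1^*\delta$ reduces us to the case $\psi_1 z=\pi_1^*\gamma$, i.e. to understanding classes $z$ whose product with $\psi_1$ is a pullback. If the spanning above were a \emph{free} decomposition over $\pi_1^*R^*(\mcM_{g,n-1})$, this relation would already force $z=0$, hence $x=\pi_1^*\delta$ into the first summand, and no symmetric classes would be needed. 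So the entire content is the failure of freeness, i.e. the tautological relations in $R^{g-2}(\mcM_{g,n})$, and the role of the subspaces $S^{g-2}_{k,l}$ is to absorb exactly this failure.

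The crux of the proof is therefore to produce enough relations from double ramification cycles and, using the linear-algebra lemmas, to show that their effect on the high powers of $\psi_1$ is captured by the symmetric subspaces. I would run this as an induction on $n$ (and, within it, on the power $a$): the base case $n=1$ has no pairs $k<l$ and is a pure forgetful reduction on $\mcM_{g,1}$, while in the inductive step the transported term $\psi_2^a\,\pi_2^*\beta$ produced by symmetrization is handled by the inductive hypothesis together with a DR relation, and Lemma~\ref{lemma: symmetry} disposes of the fully symmetric pieces that arise along the way. Assembling the DR relations in the right form and verifying that they exactly match the symmetric subspaces is the delicate book-keeping that makes Proposition~\ref{proposition: second step} the hardest of the three statements.
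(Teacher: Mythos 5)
Your proposal sets the stage correctly but does not actually prove the proposition: at the decisive moment you write that ``the crux of the proof is therefore to produce enough relations from double ramification cycles'' and that assembling them ``is the delicate book-keeping'' --- and then you stop. That book-keeping \emph{is} the proof. Everything you do supply (the comparison formulas $\psi_j=\pi_1^*\psi_j$ and $\kappa_b=\pi_1^*\kappa_b+\psi_1^b$ on the open moduli space, the resulting spanning $R^{g-2}(\mcM_{g,n})=\sum_a\psi_1^a\pi_1^*R^{g-2-a}(\mcM_{g,n-1})$, the symmetrization $\mu\equiv-i_{1,2}^*\mu \bmod S^{g-2}_{1,2}$, and the reduction via Proposition~\ref{proposition: first step} to classes $z$ with $\psi_1 z=\pi_1^*\gamma$) is a correct but essentially formal repackaging of the statement; as you yourself observe, none of these moves makes progress, and the relation $\psi_1 z=\pi_1^*\gamma$ by itself yields no information about $z$, since multiplication by $\psi_1$ is neither injective nor does it detect pullbacks. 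The paper's proof has genuinely different content: it spans $R^{g-2}(\mcM_{g,n})$ modulo $K$ by double ramification cycles (Lemmas~\ref{lemma: running point} and~\ref{lemma: DR-cycles span the ring}), derives a ``basic relation'' by multiplying the relation of Lemma~\ref{lemma: running point} by $\psi_1$ and expanding with Lemma~\ref{lemma: intersection with psi-class}, repackages it through the auxiliary cycles $V_g$ and $Z_g$ (whose spanning property requires a nontrivial determinant computation, Lemma~\ref{lemma: span}) into the ``main relation''~\eqref{formula: main relation}, and then closes the argument case by case ($n=1$, $n=2$, $n\ge 3$) using the linear-algebra Lemmas~\ref{lemma: linear system}--\ref{lemma: linear system3}. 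No step of this construction appears in your proposal, even in outline beyond the declaration that relations must exist.

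A second, concrete error: you describe the base case $n=1$ as ``a pure forgetful reduction on $\mcM_{g,1}$,'' suggesting it is the easy case. It is the opposite. For $n=1$ there are no symmetric subspaces to absorb anything, so one must prove outright that $R^{g-2}(\mcM_{g,1})=\pi_1^*(R^{g-2}(\mcM_g))+\psi_1\pi_1^*(R^{g-3}(\mcM_g))$, and the paper flags this case as exceptional: it is the only one requiring Hain's formula (via Lemma~\ref{lemma: divisible}), the explicit solution~\eqref{formula: formula for v} of a recursion in the classes $\alpha_d$, and a limit argument in the multiplicities to kill the residual class $\alpha_2$. An induction on $n$ anchored at a supposedly easy $n=1$ case, as you propose, would therefore fail at its base; the paper in fact does not induct on $n$ at all, but treats $n=1$, $n=2$, and $n\ge 3$ by separate arguments of decreasing difficulty.
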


\subsection{Proof of the generalized socle conjecture}
In Section~\ref{section: generalized top intersection} we have proved the inequality $\dim R^{g-1}(\mcM_{g,n})\ge n$. It remains to prove that $\dim R^{g-1}(\mcM_{g,n})\le n$

Let $p_i\colon\mcM_{g,n}\to\mcM_{g,1}$ be the forgetful map that forgets all marked points except the~$i$-th. Since $\dim R^{g-1}(\mcM_{g,1})=1$ (\cite{Loo95}), it is sufficient to prove that, for any $n\ge 1$, the group $R^{g-1}(\mcM_{g,n})$ is spanned by the pullbacks $p_i^*(R^{g-1}(\mcM_{g,1}))$, where $1\le i\le n$. Equivalently, we have to prove that
\begin{gather}\label{eq: tmp equation}
R^{g-1}(\mcM_{g,n})=\sum_{i=1}^n\pi_i^*(R^{g-1}(\mcM_{g,n-1})),
\end{gather}
for any $n\ge 2$.

From Propositions \ref{proposition: first step} and \ref{proposition: second step} it follows that
\begin{align*}
R^{g-1}(\mcM_{g,n})=&\pi_1^*(R^{g-1}(\mcM_{g,n-1}))+\psi_1(\pi_2\circ\pi_1)^*(R^{g-2}(\mcM_{g,n-2}))+\\
&+\psi_1\psi_2(\pi_2\circ\pi_{1})^*(R^{g-3}(\mcM_{g,n-2}))+\psi_1\pi_1^*\left(\sum_{2\le k<l\le n}S^{g-2}_{k,l}(\mcM_{g,n-1})\right).
\end{align*}
Obviously, we have
\begin{align*}
&\psi_1(\pi_2\circ\pi_1)^*(R^{g-2}(\mcM_{g,n-2}))\subset\pi_2^*(R^{g-1}(\mcM_{g,n-1})),\\
&\psi_1\psi_2(\pi_2\circ\pi_1)^*(R^{g-3}(\mcM_{g,n-2}))\subset S^{g-1}_{1,2}(\mcM_{g,n}),\\
&\psi_1\pi_1^*(S^{g-2}_{k,l}(\mcM_{g,n-1}))\subset S^{g-1}_{k,l}(\mcM_{g,n}),\text{ where $2\le k<l\le n$}.
\end{align*}
Applying Lemma \ref{lemma: symmetry} to the last two formulas, we get \eqref{eq: tmp equation}. The generalized socle conjecture is proved.


\section{Double ramification cycles}\label{section: DR-cycles}

Here we give the definition of a particular type of double ramification cycles that we need in this paper and formulate main formulas that we will use.

In Section~\ref{subsection: definition of DR-cycles} we define double ramification cycles. In Section~\ref{subsection: main formulas with DR-cycles} we prove main formulas with double ramification cycles. In Section~\ref{subsection: Hain's formula} we formulate Hain's result. In Section~\ref{subsection: DR-cycles and the tautological ring} we explain that double ramification cycles span the tautological groups of~$\mcM_{g,n}$. 

\subsection{Definition of double ramification cycles}\label{subsection: definition of DR-cycles}

Let $a_1,\dots,a_n$, $n\geq 1$, be a list of integers satisfying $\sum a_i=0$. Suppose that not all of them are equal to zero. Denote by $n_+$ the number of positive integers among the $a_i$'s. They form a partition $\mu= (\mu_1,\ldots,\mu_{n_+})$. Similarly, denote by $n_-$ the number of negative integers among the $a_i$'s. After a change of sign they form another partition $\nu =(\nu_1,\ldots,\nu_{n_-})$. Both $\mu$ and $\nu$ are partitions of the same integer 
$$
d:=\frac12\sum_{i=1}^n |a_i|.
$$
Finally, let $n_0$ be the number of vanishing $a_i$'s. 

Let $\oM_{g,n_0}(\mu,\nu)$ be the moduli space of ``rubber'' stable maps to $\mbC\mbP^1$ relative to $0$ and~$\infty$ (see e.g. \cite{GJV11, OP06}). Partitions $\mu$ and $\nu$ correspond to ramification profiles over~$0$ and~$\infty$. Denote by $\rho$ the forgetful map $\oM_{g,n_0}(\mu,\nu)\to\oM_{g,n}$.

The double ramification cycle without forgotten points is defined by 
$$
DR_g\left(\prod_{i=1}^n m_{a_i}\right):=\rho_*\left(\left[\oM_{g,n_0}(\mu,\nu)\right]^{virt}\right),
$$
where $\left[\oM_{g,n_0}(\mu,\nu)\right]^{virt}$ is the virtual fundamental class in the homology of $\oM_{g,n_0}(\mu,\nu)$ (see e.g. \cite{GJV11}).

General double ramification cycles are defined as follows. Let $k\ge 0$ and $a_1,a_2,\ldots,a_n$, $b_1,\ldots,b_k$ are integers such that not all of them are equal to zero and $\sum_{i=1}^n a_i+\sum_{j=1}^k b_j=0$. Let $\pi\colon\oM_{g,n+k}\to\oM_{g,n}$ be the forgetful map that forgets the last $k$ marked points. Then
$$
DR_g\left(\prod_{i=1}^n m_{a_i}\prod_{j=1}^k\m_{b_j}\right):=\pi_*\left(DR_g\left(\prod_{i=1}^n m_{a_i}\prod_{j=1}^k m_{b_j}\right)\right).
$$ 
In this notation the $i$-th marked point corresponds to the ramification of order $a_i$, so the order of symbols $m_{a_i}$ is important. On the other hand, the order of symbols $\m_{b_j}$ is irrelevant. Sometimes we will place them in different positions in the bracket. For example, $DR_g(m_a\m_b)$ and $DR_g(\m_b m_a)$ are, by definition, the same classes. 

The Poincare dual of $DR_g\left(\prod_{i=1}^n m_{a_i}\prod_{j=1}^k\m_{b_j}\right)$ in the cohomology of $\oM_{g,n}$ will be denoted by the same symbol. We have (see e.g. \cite{GJV11})
\begin{align*}
&DR_g\left(\prod_{i=1}^n m_{a_i}\prod_{j=1}^k\m_{b_j}\right)=0,&&\text{if $k\ge g+1$};\\
&DR_g\left(\prod_{i=1}^n m_{a_i}\prod_{j=1}^k\m_{b_j}\right)\in H^{2(g-k)}(\oM_{g,n};\mbQ),&&\text{if $k\le g$}.
\end{align*}
In \cite{FP05} it is proved that double ramification cycles belong to the tautological ring of~$\oM_{g,n}$. Below we use a lot the restriction of the classes $DR_g\left(\prod_{i=1}^n m_{a_i}\prod_{j=1}^k\m_{b_j}\right)$ to the open moduli space $\mcM_{g,n}$. Abusing notation we denote the restriction by the same symbol.

\subsection{Main formulas with double ramification cycles}\label{subsection: main formulas with DR-cycles}

Here we list main properties of double ramification cycles. 

\begin{lemma}\label{lemma: change sign in DR-cycle}
We have
\begin{gather}\label{formula: change sign in DR-cycle}
DR_g\left(\prod_{i=1}^n m_{a_i}\prod_{i=1}^k \m_{b_i}\right)=DR_g\left(\prod_{i=1}^n m_{-a_i}\prod_{i=1}^k \m_{-b_i}\right).
\end{gather}
\end{lemma}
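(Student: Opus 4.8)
The plan is to prove the symmetry (\ref{formula: change sign in DR-cycle}) by identifying it with a geometric symmetry of the underlying moduli space of rubber stable maps to $\CP1$. The key observation is that the relevant involution on $\CP1$ is the automorphism $z\mapsto 1/z$, which interchanges the two marked points $0$ and $\infty$. Since the partitions $\mu$ and $\nu$ are precisely the ramification profiles over $0$ and $\infty$, this involution exchanges the roles of the two profiles: a rubber map with profile $\mu$ over $0$ and $\nu$ over $\infty$ is carried to a rubber map with profile $\nu$ over $0$ and $\mu$ over $\infty$. On the level of the integers $a_i$, this corresponds exactly to replacing each $a_i$ by $-a_i$, which is the content of the formula for the classes without forgotten points.

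First I would treat the case $k=0$ (no forgotten points). Let $\iota\colon\CP1\to\CP1$ be the map $z\mapsto 1/z$. Composing a rubber stable map $f\colon C\to\CP1$ with $\iota$ induces an isomorphism
$$
\Phi\colon\oM_{g,n_0}(\mu,\nu)\xrightarrow{\ \sim\ }\oM_{g,n_0}(\nu,\mu)
$$
of moduli spaces of rubber maps, which is compatible with the respective forgetful maps $\rho$ to $\oM_{g,n}$ in the sense that it does not change the source curve or the positions of the marked points, only the labels $a_i\rightsquigarrow-a_i$ attached to them. Because $\Phi$ is an isomorphism of spaces carrying their obstruction theories, it identifies the virtual fundamental classes, and hence pushing forward along $\rho$ yields the equality
$$
DR_g\left(\prod_{i=1}^n m_{a_i}\right)=DR_g\left(\prod_{i=1}^n m_{-a_i}\right).
$$
I would then lift this to general $k$ by applying the forgetful-pushforward definition of $DR_g$ with forgotten points: since the involution acts by $b_j\mapsto-b_j$ on the forgotten markings as well, and the pushforward $\pi_*$ along $\oM_{g,n+k}\to\oM_{g,n}$ commutes with the identification, formula (\ref{formula: change sign in DR-cycle}) follows immediately in full generality.

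The main point requiring care — and what I expect to be the only genuine obstacle — is verifying that the isomorphism $\Phi$ genuinely respects the virtual class, i.e. that the relative Gromov--Witten / rubber obstruction theory is invariant under post-composition with $\iota$. This is essentially automatic because $\iota$ is an automorphism of the target $\CP1$ and the virtual class depends only on the target up to such automorphisms, but one must be slightly attentive to the rubber (torus-quotient) structure: the $\mbC^*$-action on the target by scaling is conjugated by $\iota$ to the inverse action, and this is compatible with the rubber quotient. Granting this standard compatibility, the remaining steps are purely formal bookkeeping of signs of the $a_i$ and $b_j$, and the lemma is proved.
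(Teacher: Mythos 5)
Your proposal is correct and matches the paper's proof, which simply states that the identity is obvious from the definition of double ramification cycles: flipping all signs exchanges the partitions $\mu$ and $\nu$, i.e.\ the roles of $0$ and $\infty$, and the induced identification of rubber moduli spaces respects the virtual class and the forgetful maps. Your write-up just makes this implicit argument explicit, including the reduction of the general case ($k\ge 1$) to the case without forgotten points via the push-forward definition.
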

\begin{proof}
The proof is obvious from the definition of double ramification cycles.
\end{proof}

\begin{lemma}\label{lemma: intersection with psi-class}
Let $a_1,a_2,\ldots,a_n$, $n\ge 1$, and $b_1,b_2,\ldots,b_k$, $1\le k\le g+1$, be non-zero integers such that $\sum_{i=1}^n a_i+\sum_{j=1}^k b_j=0$. Then we have the following equation in~$R^{g-k+1}(\mcM_{g,n})$:
\begin{align}\label{eq:int-psi}
\psi_1 DR_g\left(\prod_{i=1}^n m_{a_i}\prod_{j=1}^k \m_{b_j}\right)=&-\sum_{1\le i<j\le k}\frac{b_i+b_j}{ra_1}DR_g\left(\prod_{l=1}^n m_{a_l}\m_{b_i+b_j}\prod_{l\ne i,j}\m_{b_l}\right)\\
&-\sum_{i=2}^n\sum_{j=1}^k\frac{a_i+b_j}{ra_1}DR_g\left(\prod_{l=1}^n m_{a_l+\delta_{l,i}b_j}\prod_{l\ne j}\m_{b_l}\right) \notag\\
&+\sum_{j=1}^k\frac{-a_1+(r-1)b_j}{ra_1}DR_g\left(m_{a_1+b_j}\prod_{i=2}^n m_{a_i}\prod_{l\ne j}\m_{b_l}\right),\notag
\end{align}
where $r:=2g-2+n+k$.
\end{lemma}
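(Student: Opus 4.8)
The plan is to move the computation onto the moduli space of rubber maps, where the cotangent class at a relative point satisfies a clean boundary relation, and then to restrict the resulting identity to $\mcM_{g,n}$, on which every class supported on the boundary of $\oM_{g,n}$ vanishes. Since all the $a_i$ and $b_j$ are nonzero, the definition in Section~\ref{subsection: definition of DR-cycles} gives
\begin{equation*}
DR_g\Bigl(\prod_{i=1}^n m_{a_i}\prod_{j=1}^k \m_{b_j}\Bigr)=(\pi\circ\rho)_*\bigl[\oM_{g,0}(\mu,\nu)\bigr]^{virt},
\end{equation*}
where $\rho\colon\oM_{g,0}(\mu,\nu)\to\oM_{g,n+k}$ records the source curve together with all its $n+k$ relative points, the partitions $\mu,\nu$ now encoding all of $a_1,\dots,a_n,b_1,\dots,b_k$, and $\pi\colon\oM_{g,n+k}\to\oM_{g,n}$ forgets the $k$ points carrying the weights $b_j$. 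By the projection formula,
\begin{equation*}
\psi_1\cdot(\pi\circ\rho)_*\bigl[\oM_{g,0}(\mu,\nu)\bigr]^{virt}
=(\pi\circ\rho)_*\Bigl((\pi\circ\rho)^*\psi_1\cap\bigl[\oM_{g,0}(\mu,\nu)\bigr]^{virt}\Bigr),
\end{equation*}
so the whole computation reduces to understanding the pullback of $\psi_1$ capped with the virtual class on the rubber space.

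The engine of the proof is the rubber-calculus relation for the cotangent class at the relative point $p_1$ of multiplicity $a_1$ (see \cite{GJV11,OP06,BSSZ12}). Because the map looks locally like $z\mapsto z^{a_1}$ at $p_1$, the pullback of the target cotangent line equals $a_1\psi_1$ up to corrections supported on the divisors where the rubber target degenerates; taking also into account the comparison of $\psi_1$ with its pullback along the forgetful map $\pi$, I expect $(\pi\circ\rho)^*\psi_1$ capped with the virtual class to be a signed combination of divisors on each of which the source splits as a genus-$g$ main component glued to a genus-$0$ bubble along a single node whose multiplicity equals the sum of the weights carried by the bubble. The division by $a_1$ in the statement reflects the normalization $a_1\psi_1=(\text{divisors})$, while the factor $r=2g-2+n+k$, which is $-\chi$ of the $(n+k)$-pointed source, should enter through the dilaton-type normalization relating the pulled-back class to the intrinsic rubber classes, so that the geometric relation is most naturally written for $r\,a_1\,\psi_1$ and one solves for $\psi_1\cdot DR_g$.

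Pushing this combination forward by $\pi\circ\rho$ and restricting to $\mcM_{g,n}$ kills every stratum in which the genus-$g$ main component is singular, so only the bubbles attached along one node survive, and each such bubble contributes a double ramification cycle in which the weights sitting on the bubble have been merged into the node weight. Sorting the surviving bubbles by their content yields exactly the three families on the right-hand side: a bubble carrying two forgotten points $b_i,b_j$ (merging to $\m_{b_i+b_j}$, first sum), a bubble carrying a forgotten point $b_j$ and a kept point $a_i$ with $i\ge 2$ (merging to $m_{a_i+b_j}$, second sum), and a bubble carrying $p_1$ itself together with a forgotten point $b_j$ (merging to $m_{a_1+b_j}$, third sum); the distinguished role of $p_1$, which carries the $\psi_1$, is precisely what makes the third coefficient $\tfrac{-a_1+(r-1)b_j}{ra_1}$ differ from the first two. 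Throughout I would use Lemma~\ref{lemma: change sign in DR-cycle} to treat the two ends $0$ and $\infty$ of the rubber target symmetrically and to normalize signs.

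The main obstacle is this last bookkeeping: enumerating the surviving rubber divisors without overcounting symmetric bubbles, correctly matching each node multiplicity with the numerators $b_i+b_j$, $a_i+b_j$ and $-a_1+(r-1)b_j$, and pinning down the exact appearance of $r$ and of the special numerator for the point carrying $\psi_1$. I would fix all conventions by checking the small cases $k=1$ and small $n$ before asserting the general identity.
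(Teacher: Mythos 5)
Your overall strategy is the same one the paper uses: realize the class as a pushforward from $\oM_{g,n+k}$ (equivalently, from the rubber space), lift $\psi_1$ via the projection formula, intersect upstairs, and observe that after pushing forward and restricting to $\mcM_{g,n}$ the only surviving degenerations are those with a single genus-$0$ bubble carrying exactly two marked points attached to a genus-$g$ component; sorting the bubbles by content then produces the three families on the right-hand side. The paper does exactly this, except that instead of re-deriving the rubber boundary relation from the local model $z\mapsto z^{a_1}$ it invokes \cite[Theorem 4]{BSSZ12} for the intersection $\psi_1\cdot DR_g$ on $\oM_{g,n+k}$, together with the comparison $\pi^*\psi_1=\psi_1-D$ and the local multiplicity computation of \cite[Lemma 3.3]{SZ08} for the correction divisor $D$.

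The genuine gap is that the entire content of the lemma is the precise coefficients, and you never derive them. You write that you ``expect'' the pulled-back class to be a signed combination of bubble divisors, and you defer the numerators $b_i+b_j$, $a_i+b_j$, $-a_1+(r-1)b_j$ and the overall factor $\frac{1}{ra_1}$ to bookkeeping you acknowledge not to have carried out; but that bookkeeping is exactly where the work lies, and it requires either citing the degeneration formula of \cite[Theorem 4]{BSSZ12} (where $r$ enters as the number of branch points, via the branching morphism to the Losev--Manin space) or reproving it, which your outline does not do. Moreover, the special third coefficient is not produced by a single mechanism: in the paper it arises as the sum of two distinct contributions, namely the term $-\sum_{j}DR_g\bigl(m_{a_1+b_j}\prod_{i\ge 2}m_{a_i}\prod_{l\ne j}\m_{b_l}\bigr)$ coming from the divisor correction in $\pi^*\psi_1=\psi_1-D$, and the term with coefficient $\frac{(r-1)(a_1+b_j)}{ra_1}$ coming from the BSSZ formula; this cancellation structure is invisible in your sketch, where the special coefficient is attributed vaguely to the ``distinguished role'' of $p_1$. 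Finally, your plan to pin down conventions by checking $k=1$ gives no constraint at all, since for $k=1$ the identity holds trivially because $R^{g}(\mcM_{g,n})=0$; and the boundary case $k=g+1$, where the left-hand side vanishes while the right-hand side is a nontrivial numerical combination of the degree-zero classes $g!\prod b_j^2$, requires a separate verification that you do not mention.
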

\begin{proof}
First, observe that for $k=1$ we have a trivial identity, because $R^g(\mcM_{g,n}) = 0$. For $k=g+1$ the left-hand side of this equation is equal to zero, while the vanishing of the right-hand side follows from the substitution 
$$
DR_g\left(\prod_{i=1}^n m_{a_i}\prod_{j=1}^g \m_{b_j}\right)=g!\prod_{j=1}^g b_j^2.
$$ 

The proof of this formula in the general case is based on~\cite[Theorem 4]{BSSZ12}. Indeed, by definition, the class  $DR_g\left(\prod_{i=1}^n m_{a_i}\prod_{j=1}^k \m_{b_j}\right)$ is the restriction to $\mcM_{g,n} \subset \oM_{g,n}$ of the push-forward of the class $DR_g\left(\prod_{i=1}^n m_{a_i}\prod_{j=1}^k m_{b_j}\right)$ defined in the tautological ring of $\oM_{g,n+k}$. To prove the identity we will use the projection formula. Namely, we lift the class $\psi_1$ to $\oM_{g,n+k}$, intersect it there with the class $DR_g\left(\prod_{i=1}^n m_{a_i}\prod_{j=1}^k m_{b_j}\right)$ using~\cite[Theorem 4]{BSSZ12} and then push forward this intersection to $\oM_{g,n}$ and restrict it to~$\mcM_{g,n}$.

Consider the class $\pi^*\psi_1$ for $\pi\colon\oM_{g,n+k}\to\oM_{g,n}$. This class is equal to $\psi_1-D$, where~$D$ is the class of the divisor on which the points labeled by $a_1$ and $b_i$, $i\in I\subset\{1,\dots,k\}$ lie on a separate bubble. The choice of a subset $I\ne\emptyset$ here corresponds to a choice of an irreducible component of $D$. For dimensional reasons, the only non-trivial contribution under the push-forward is given by the irreducible components of $-D$ that correspond to $|I|=1$, that is, the point $a_1$ lie on a separate bubble with exactly one point~$b_i$, $i=1,\dots,k$. The push-forward $\pi_*$ of this intersection is equal to the class 
\begin{equation}\label{eq:1st-summand}
-\sum_{j=1}^k DR_g\left(m_{a_1+b_j}\prod_{i=2}^n m_{a_i}\prod_{l\ne j}\m_{b_l}\right)
\end{equation}
(See, e.~g.~the local computation of multiplicity in~\cite[Lemma 3.3]{SZ08}. Here we have a different global geometry of the space, but the local multiplicity is computed in exactly the same way).

Now we use the formula for the intersection $\psi_1\cdot DR_g\left(\prod_{i=1}^n m_{a_i}\prod_{j=1}^k m_{b_j}\right)$ in~\cite[Theorem 4]{BSSZ12}. We have there a sum over all possible degenerations of the DR-cycle into two DR-cycles
\[
DR_{g_1}\left(\prod_{i\in I} m_{a_i}\prod_{j\in J} m_{b_j}\prod_{l=1}^p m_{-c_k}\right)
\boxtimes
DR_{g_2}\left(\prod_{i\in I'} m_{a_i}\prod_{j\in J'} m_{b_j}\prod_{l=1}^p m_{c_k}\right),
\]
where $g_1+g_2+p-1=g$, $I\sqcup I'=\{1,\dots,n\}$, and $J\sqcup J'=\{1,\dots,k\}$, with some combinatorially defined coefficients. Here the symbol $\boxtimes$ refers to the gluing of the points $-c_k$ and $c_k$ to a node, $k=1,\dots,p$, and for the full definition we refer to \cite[Section 2.1]{BSSZ12}. Now we claim, that the only degenerations of the DR-cycle, that do not vanish under the restriction of the push-forward $\pi_*$ to the open moduli space $\mcM_{g,n}$, can be described in the following way:
\begin{enumerate}
\item We have $p=1$;
\item Either $g_1=0$ or $g_2=0$;
\item The genus zero component has only three special points, where one point is $m_{\pm c_1}$, and the other two are either a pair $m_{a_i}, m_{b_j}$, or a pair $m_{b_i}, m_{b_j}$.
\end{enumerate}
Indeed, if $p>1$, or both $g_1,g_2\geq 1$, or the genus $0$ component contains at least two points $a_i,a_j$, then the push-forward of this class lies on the boundary of the moduli space $\oM_{g,n}$. So, we have genus zero component with at most $1$ point $a_i$. Then this component should contain precisely two points from the list $a_1,\dots,a_n,b_1,\dots,b_k$ for dimensional reasons. This way we come to the description above.

Thus, we have three essentially different cases: the two marked points on the genus~0 component can be either $m_{b_i}, m_{b_j}$, or $m_{a_i}, m_{b_j}$, $i\not=1$, or $m_{a_1}, m_{b_j}$. In the first case, the class (including the coefficient) that we get is equal to
\[
-\frac{b_i+b_j}{ra_1}DR_g\left(\prod_{l=1}^n m_{a_l}\m_{b_i+b_j}\prod_{l\ne i,j}\m_{b_l}\right).
\]
The sum of these classes is exactly the first summand on the right hand side of equation~\eqref{eq:int-psi}. In the second case, after the push-forward and restriction to the open part, we obtain the class
\[
-\frac{a_i+b_j}{ra_1}DR_g\left(\prod_{l=1}^n m_{a_l+\delta_{l,i}b_j}\prod_{l\ne j}\m_{b_l}\right),
\]
and these classes sum up to the second summand of our final formula~\eqref{eq:int-psi}. In the last case, we have the class
\begin{equation}\label{eq:2nd-summand}
\frac{(r-1)(a_1+b_j)}{ra_1}DR_g\left(m_{a_1+b_j}\prod_{i=2}^n m_{a_i}\prod_{l\ne j}\m_{b_l}\right).
\end{equation}
Recall that we already got a contribution of the same class with a different coefficient in~\eqref{eq:1st-summand}. The sum of the classes~\eqref{eq:2nd-summand} and the corresponding summand in~\eqref{eq:1st-summand} is equal to
\[
\frac{-a_1+(r-1)b_j}{ra_1}DR_g\left(m_{a_1+b_j}\prod_{i=2}^n m_{a_i}\prod_{l\ne j}\m_{b_l}\right).
\]
The sum of these terms is precisely the third summand in equation~\eqref{eq:int-psi}.
\end{proof}

\begin{lemma}\label{lemma: running point}
Let $a_1,a_2,\ldots,a_n$, $n\ge 0$, and $b_1,b_2,\ldots,b_k$, $1\le k\le g$, be non-zero integers such that $\sum_{i=1}^n a_i+\sum_{j=1}^k b_j=0$. Then we have the following relation in~$R^{g-k+1}(\mcM_{g,n+1})$: 
\begin{gather}\label{eq:class}
\sum_{i=1}^k b_i DR_g\left(m_{b_i}\prod_{j\ne i}\m_{b_j}\prod_{l=1}^n m_{a_l}\right)\in\pi_1^*(R^{g-k+1}(\mcM_{g,n})).
\end{gather}
\end{lemma}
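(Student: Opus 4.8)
The plan is to deduce the relation \eqref{eq:class} directly from the $\psi_1$-intersection formula \eqref{eq:int-psi} of Lemma~\ref{lemma: intersection with psi-class}, by introducing an auxiliary running point whose ramification order $\alpha$ is afterwards sent to $0$. Concretely, I would apply \eqref{eq:int-psi} on $\mcM_{g,n+1}$ to the configuration with $n+1$ honest points of orders $\alpha,a_1,\dots,a_n$ (the first being the new running point) and $k$ forgotten points of orders $b_1-\alpha,b_2,\dots,b_k$. The shift in the first forgotten order is chosen so that all orders sum to zero for \emph{every} value of $\alpha$, hence \eqref{eq:int-psi} is a genuine geometric identity for all generic $\alpha\ne 0$; here the relevant constant is $r=2g-1+n+k$. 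I would then multiply the whole identity by $r\alpha$ to clear denominators and regard both sides as polynomials in $\alpha$ valued in $R^{g-k+1}(\mcM_{g,n+1})$, using the polynomiality of double ramification cycles in the ramification orders (which follows from Hain's formula, Section~\ref{subsection: Hain's formula}). Finally I would let $\alpha\to 0$.

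In this limit the left-hand side $r\alpha\,\psi_1 DR_g(\cdots)$ vanishes, since the prefactor $r\alpha$ tends to $0$ while the (polynomial) cycle stays bounded. In the third sum of \eqref{eq:int-psi} the running point acquires order $\alpha+(b_j-\alpha\,\delta_{j,1})$, which equals $b_j$ at $\alpha=0$, and the rescaled coefficient $-\alpha+(r-1)(b_j-\alpha\,\delta_{j,1})$ tends to $(r-1)b_j$; thus the third sum converges to $(r-1)\sum_{j=1}^k b_j\,DR_g(m_{b_j}\prod_{l\ne j}\m_{b_l}\prod_l m_{a_l})$, i.e. to $(r-1)$ times the very combination we want to control. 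In the first and second sums the running point is indexed out of every combination (recall the sums run over $i<j$ among the forgotten points, resp. over honest points other than the first), so every surviving term has the shape $DR_g(m_0\cdot X)$: a double ramification cycle carrying a running \emph{honest} point of order $0$.

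The last input is that an order-$0$ marked point produces a pullback, $DR_g(m_0\cdot X)=\pi_1^*DR_g(X)$, because on the open moduli space an order-$0$ point imposes no ramification condition and merely records a free point of the source curve, so the associated virtual class is pulled back along the forgetful map $\pi_1$. Consequently the limiting first and second sums both lie in $\pi_1^*\big(R^{g-k+1}(\mcM_{g,n})\big)$. Collecting everything, the limit of the rescaled identity reads $0=(r-1)\sum_j b_j\,DR_g(m_{b_j}\prod_{l\ne j}\m_{b_l}\prod_l m_{a_l})+\pi_1^*(\gamma)$ for some $\gamma\in R^{g-k+1}(\mcM_{g,n})$, and since $r-1=2g-2+n+k\ne 0$ we may divide to obtain \eqref{eq:class}.

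The main obstacle is the justification of the limit rather than the bookkeeping: one must know that all classes entering \eqref{eq:int-psi} depend polynomially on the orders, so that specializing $\alpha=0$ after clearing denominators is legitimate (this is exactly what Hain's formula on $\mcM_{g,n}$ provides), and one must check the order-$0$ pullback statement cleanly on the open part, where no boundary corrections arise. A little care is also required for the edge term coming from $b_1-\alpha$, but it only fixes which index survives in the third sum and does not affect the structure of the limit.
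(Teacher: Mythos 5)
Your proposal is correct in outline, and it takes a genuinely different route from the paper. The paper proves this lemma either (for $n\ge 1$) by multiplying $\pi_1^*DR_g\left(\prod_j\m_{b_j}\prod_i m_{a_i}\right)$ by the $\psi$-class at an honest point and invoking \cite[Theorem 5]{BSSZ12}, which is the variant of the intersection formula that allows a marked point of multiplicity zero, or (in general, covering $n=0$) by pulling back the trivial divisor relation $D_{0,p_1|q,\infty}-D_{0,q|p_1,\infty}=0$ from the Losev--Manin space via the branching morphism, following \cite{Ion02}. You instead derive the statement from Lemma~\ref{lemma: intersection with psi-class} alone, by introducing an auxiliary honest point of order $\alpha$ (with the compensating shift $b_1\mapsto b_1-\alpha$), clearing denominators, and specializing the resulting polynomial identity at $\alpha=0$; polynomiality in the ramification orders is available on the open part by Hain's formula combined with pushforward, exactly as in Lemma~\ref{lemma: divisible}. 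Your bookkeeping is right: with $r=2g-1+n+k$ the third sum specializes to $(r-1)\sum_j b_j DR_g\left(m_{b_j}\prod_{l\ne j}\m_{b_l}\prod_l m_{a_l}\right)$, the left-hand side vanishes, and $r-1=2g-2+n+k\ne 0$. This route buys two things: it bypasses both \cite[Theorem 5]{BSSZ12} and the Losev--Manin argument, and it treats $n=0$ uniformly, since the auxiliary point itself supplies the honest point required by the hypotheses of Lemma~\ref{lemma: intersection with psi-class}.

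The one step you should not leave at the heuristic level is the identity $DR_g(m_0\cdot X)=\pi_1^*DR_g(X)$ on $\mcM_{g,n+1}$, which you justify by saying that the virtual class of the rubber space with an order-zero point ``is pulled back along the forgetful map.'' That slogan is true but not a proof: the DR cycle is a pushforward $\rho_*$ of a virtual class, and the relevant square (rubber maps with an extra free point over rubber maps, mapping to $\oM_{g,n+1}$ over $\oM_{g,n}$) is not Cartesian, so push--pull compatibility is not automatic. Within the paper's own toolkit the clean fix is again Hain's formula: on $\mcM^{\rm rt}_{g,N}$, setting the order of an honest point to zero kills its contribution to Hain's polynomial, and the remaining expression is visibly the pullback under the map forgetting that point --- this is literally the computation in the proof of Lemma~\ref{lemma: divisible}, applied to an honest rather than a forgotten point. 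For the terms of your first and second sums, which carry $k-1$ forgotten points, you additionally need that pushforward along the map forgetting those points commutes with pullback along $\pi_1$; this base-change statement holds because the map from $\oM_{g,n+k}$ to the fiber product $\oM_{g,n+1}\times_{\oM_{g,n}}\oM_{g,n+k-1}$ is proper and birational, so the projection formula yields the commutation. With these two points made explicit, your argument is complete.
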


\begin{proof}
There are several possible proofs of this lemma. In the case $k=1$, the lemma is obvious, since the class~\eqref{eq:class} is equal to zero. If we assume that $n\geq 1$, then we can prove this lemma in the following way. We consider the class $\psi_2$ (this is the $\psi$-class at the point labelled by $a_1$) multiplied by 
$\pi_1^*DR_g\left(\prod_{j=1}^k\m_{b_j}\prod_{i=1}^n m_{a_i}\right)$. On the open part it is equal to $\pi_1^*\psi_2\cdot\pi_1^*DR_g\left(\prod_{j=1}^k\m_{b_j}\prod_{i=1}^n m_{a_i}\right)$, so it is a pull-back. On the other hand, we can compute this class using~\cite[Theorem 5]{BSSZ12} (we should assign to the point $x_1$ the multiplicity zero), and, via the same argument as in the proof of Lemma~\ref{lemma: intersection with psi-class}, we see, that the only terms, that contribute to the restriction of the class, that we obtain, to the open moduli space, are the class~\eqref{eq:class} and further classes in $\pi_1^*(R^{g-k+1}(\mcM_{g,n}))$. In the general case we use the argument very close to the one in~\cite[Proposition 2.8]{Ion02}.

Consider the space $\oM_{g,1}(\mu,\nu)$ of rubber maps to $\CP^1$ with one marked point $x_1$. We assume that the collection of multiplicities $\{\mu_1,\dots,\mu_{m_+},-\nu_1,\dots,-\nu_{m_-}\}$ is equal to $\{a_1,\dots,a_n,b_1,\dots,b_k\}$, up to a reordering. The branching morphism $\sigma$ takes the space $\oM_{g,1}(\mu,\nu)$ to the space $\mathrm{LM}_{r+1}/S_r$, that is, to the quotient of the Losev-Manin moduli space~(\cite{LM00}) by the action of the symmetric group that permutes the branch points of the rubber maps. We refer to~\cite[Section 2.2]{BSSZ12} for a full discussion of this branching morphism in this setting.

We consider the following divisors in $\mathrm{LM}_{r+1}$. Let $p_1,\dots,p_r$ be the branch points and let $q$ be the image of $x_1$. We consider the $S_r$-symmetrization of the divisor $D_{0,p_1|q,\infty}-D_{0,q|p_1,\infty}$, where by $D_{a,b|c,d}$ we denote the divisor of two-component curves, where the pairs of points $a,b$ and $c,d$ lie on different components. Denote this symmetrized divisor by $D$. The class of $D_{0,p_1|q,\infty}-D_{0,q|p_1,\infty}$ is equal to zero, therefore, the class $\sigma^*D$ of the pull-back of the symmetrization of this divisor to $\oM_{g,1}(\mu,\nu)$ is also equal to zero. 

We consider the maps $\rho\colon \oM_{g,1}(\mu,\nu)\to\oM_{g,n+k+1}$ and $\pi\colon\oM_{g,n+k+1}\to\oM_{g,n+1}$. Our goal now is to compute the restriction to the open moduli space $\mcM_{g,n+1}$ of the class~$\pi_*\rho_*\sigma^*D$. We claim that his class is equal to the sum of the class~\eqref{eq:class} and a class in $\pi_1^*(R^{g-k+1}(\mcM_{g,n}))$. Since, on the other hand, we know that  $\pi_*\rho_*\sigma^*D=0$, this proves the lemma.

So let us compute $\pi_*\rho_*\sigma^*D$. We can do it for each component separately along the lines of the computation of the analogous class in~\cite[Lemma 2.4]{BSSZ12}. Using the same argument, as in the proof of Lemma~\ref{lemma: intersection with psi-class}, we obtain a non-trivial contribution only from the divisors 
\[
DR_{g_1}\left(\prod_{i\in I} m_{a_i}\prod_{j\in J} m_{b_j}\prod_{l=1}^p m_{-c_l}\right)
\boxtimes
DR_{g_2}\left(\prod_{i\in I'} m_{a_i}\prod_{j\in J'} m_{b_j}\prod_{l=1}^p m_{c_l}\right),
\]
of two possible types. One type is exactly the divisors described in Lemma~\ref{lemma: intersection with psi-class}, that is, $p=1$, one component is of genus $0$, it contains exactly two marked points that are labeled either by $a_i,b_j$ or $b_i,b_j$, and the marked point $x_1$ lies on the other component. The last condition follows from dimensional reason. Since there are no restrictions on the position of $x_1$, all classes of this type project to $\pi_1^*(R^{g-k+1}(\mcM_{g,n}))$.

Let us describe the other possible type. We still have $p=1$ and one of the components is of genus $0$ (otherwise, the restriction to the open moduli space would be trivial), and on the component of genus $0$ we have only two marked points (that is for dimensional reason), but now these two marked points will be $x_1$ and $b_i$. This class projects to the $i$-th term in the sum~\eqref{eq:class}, and its coefficient, according to~\cite[Lemma 2.4]{BSSZ12}, is precisely~$b_i$. This completes the proof of the lemma. 
\end{proof}

\subsection{Hain's formula}\label{subsection: Hain's formula}

Consider the moduli space $\mcM^{\rm rt}_{g,n}$ of stable curves with rational tails. In this section we work in the cohomology of $\mcM^{\rm rt}_{g,n}$. Let $\psi^\dagger_i=p_i^*\psi_i$, where $p_i\colon\mcM^{\rm rt}_{g,n}\to\mcM^{\rm rt}_{g,1}$ is the forgetful map that forgets all marked points except the $i$-th. Let $J$ be any subset of $\{1,2,\ldots,n\}$ such that $|J|\ge 2$. Denote by $D_J$ the divisor in $\mcM_{g,n}^{\rm rt}$ that is formed by stable curves with a rational component that contains exactly marked points numbered by the subset~$J$.

There is the following formula discovered by Hain (\cite{Hain11}):
\begin{gather}\label{formula: Hain's formula}
DR_g\left(\prod_{i=1}^n m_{a_i}\right)=\frac{1}{g!}\left(\sum_{i=1}^n\frac{a_i^2\psi^\dagger_i}{2}-\sum_{\substack{J\subset\{1,2,\ldots,n\}\\|J|\ge 2}}\left(\sum_{i,j\in J, i< j}a_ia_j\right)D_J\right)^g.
\end{gather}
To be precise, in \cite{Hain11} this formula was proved for a version of double ramification cycles that is constructed using the universal Jacobian over the moduli space of curves. Luckily, in \cite{CMW11} it is proved that this version coincides with our version, if we restrict them to $\mcM^{\rm rt}_{g,n}$. 

From~\eqref{formula: Hain's formula} it follows that, as a cohomology class in $H^{2g}(\mcM_{g,n}^{\rm rt};\mbQ)$, the class 
$$
DR_g\left(m_{-\sum_{i=2}^n a_i}\prod_{i=2}^n m_{a_i}\right)
$$
is a homogeneous polynomial of degree $2g$ in the variables $a_2,\ldots,a_n$. Let us prove the following simple lemma.

\begin{lemma}\label{lemma: divisible}
As a cohomology class in $H^{2g-2}(\mcM_{g,n}^{\rm rt};\mbQ)$, the class 
$$
DR_g\left(m_{-\sum_{i=2}^n a_i}\prod_{i=2}^{n-1} m_{a_i}\m_{a_n}\right)
$$
is a homogeneous polynomial of degree $2g$ in the variables $a_2,\ldots,a_n$. Moreover, this polynomial is divisible by $a_n$.
\end{lemma}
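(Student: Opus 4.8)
The plan is to reduce everything to the class without the forgotten point and then exploit the defining forgetful pushforward together with Hain's formula. Write $a_1:=-\sum_{i=2}^n a_i$ and let $\pi\colon\mcM^{\rm rt}_{g,n}\to\mcM^{\rm rt}_{g,n-1}$ be the map forgetting the marked point carrying the multiplicity $a_n$, so that by definition
\[
DR_g\Big(m_{a_1}\prod_{i=2}^{n-1}m_{a_i}\,\m_{a_n}\Big)=\pi_*\Big(DR_g\big(m_{a_1}\textstyle\prod_{i=2}^{n}m_{a_i}\big)\Big).
\]
The homogeneity statement is then immediate: by the observation following Hain's formula~\eqref{formula: Hain's formula}, the class $DR_g(m_{a_1}\prod_{i=2}^n m_{a_i})$ is a homogeneous polynomial of degree $2g$ in $a_2,\dots,a_n$ whose coefficients lie in $H^{2g}(\mcM^{\rm rt}_{g,n};\mbQ)$. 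Since $\pi_*$ is $\mbQ$-linear and does not involve the variables $a_i$, applying it coefficient by coefficient produces a homogeneous polynomial of the same degree $2g$, now with coefficients in $H^{2g-2}(\mcM^{\rm rt}_{g,n-1};\mbQ)$. This settles the first assertion.

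For the divisibility it suffices to show that this polynomial vanishes at $a_n=0$, since a polynomial in $a_2,\dots,a_n$ with coefficients in a $\mbQ$-vector space is divisible by $a_n$ exactly when its specialization at $a_n=0$ is zero. Because $\pi_*$ commutes with the substitution $a_n=0$, that specialization equals $\pi_*\big(DR_g(m_{a_1}\prod_{i=2}^{n-1}m_{a_i}\,m_0)\big)$, where now $a_1=-\sum_{i=2}^{n-1}a_i$ and the last marked point carries multiplicity zero. The key identity I would prove is that inserting a marked point of weight zero merely pulls the cycle back:
\[
DR_g\Big(m_{a_1}\prod_{i=2}^{n-1}m_{a_i}\,m_0\Big)=\pi^*\,DR_g\Big(m_{a_1}\prod_{i=2}^{n-1}m_{a_i}\Big).
\]
Granting this, the projection formula gives $\pi_*\pi^*\beta=\beta\cdot\pi_*(1)=0$, since $\pi_*(1)\in H^{-2}=0$ (the fibres of $\pi$ are one-dimensional); hence the value at $a_n=0$ vanishes and $a_n$ divides the class.

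It remains to establish the pullback identity, which I regard as the main obstacle. I would verify it directly from Hain's formula. Denoting by $P$ the bracketed divisor class of~\eqref{formula: Hain's formula}, one specializes at $a_n=0$: the term $a_n^2\psi_n^\dagger/2$ drops out, and in each coefficient $\sum_{i<j\in J}a_ia_j$ every pair containing $n$ vanishes, so the coefficient of $D_J$ becomes the one attached to $J\setminus\{n\}$. Regrouping the $J$-sum according to $J':=J\setminus\{n\}$ (only $|J'|\ge2$ survives, smaller $J'$ carrying vanishing coefficients) turns $P|_{a_n=0}$ into
\[
\sum_{i=1}^{n-1}\frac{a_i^2\psi_i^\dagger}{2}-\sum_{\substack{J'\subset\{1,\dots,n-1\}\\|J'|\ge2}}\Big(\sum_{i<j\in J'}a_ia_j\Big)\big(D_{J'}+D_{J'\cup\{n\}}\big).
\]
On the other hand I would apply the standard pullback rules under the forgetful map on rational tails, namely $\pi^*\psi_i^\dagger=\psi_i^\dagger$ for $i\ne n$ (valid precisely because $\psi_i^\dagger=p_i^*\psi_i$ is itself a pullback and $p_i$ factors through $\pi$) and $\pi^*D_{J'}=D_{J'}+D_{J'\cup\{n\}}$, to recognize the displayed expression as $\pi^*$ of Hain's divisor for $DR_g(m_{a_1}\prod_{i=2}^{n-1}m_{a_i})$. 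Raising to the $g$-th power and dividing by $g!$ then yields the pullback identity. The only step needing genuine care is the boundary computation $\pi^*D_{J'}=D_{J'}+D_{J'\cup\{n\}}$ (preimage of a rational tail is the locus where the forgotten point is, or is not, on that tail) together with the bookkeeping of the regrouped sum; everything else is formal.
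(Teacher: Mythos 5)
Your proposal is correct and follows essentially the same route as the paper: homogeneity comes from Hain's formula together with linearity of the pushforward, and divisibility by $a_n$ comes from observing that the bracketed divisor in Hain's formula specialized at $a_n=0$ is the pullback $\pi^*$ of the corresponding divisor on $\mcM^{\rm rt}_{g,n-1}$, so the pushforward vanishes by the projection formula since $\pi_*(1)=0$. Your intermediate reformulation as the identity $DR_g(\cdots m_0)=\pi^*DR_g(\cdots)$, and your explicit verification of $\pi^*D_{J'}=D_{J'}+D_{J'\cup\{n\}}$ and the regrouping of the $J$-sum, simply spell out the details the paper compresses into ``it is easy to see.''
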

\begin{proof}
Let $a_1:=-\sum_{i=2}^n a_i$. We have $DR_g\left(\prod_{i=1}^{n-1} m_{a_i}\m_{a_n}\right)=\pi_*\left(DR_g\left(\prod_{i=1}^n m_{a_i}\right)\right)$, where $\pi\colon\mcM^{\rm rt}_{g,n}\to\mcM^{\rm rt}_{g,n-1}$ is the forgetful morphism that forgets the last marked point. Thus, the first statement is clear. 

It is easy to see that
\begin{multline*}
\left.\left(\sum_{i=1}^n\frac{a_i^2\psi^\dagger_i}{2}-\sum_{\substack{J\subset\{1,2,\ldots,n\}\\|J|\ge 2}}\left(\sum_{i,j\in J, i<j}a_ia_j\right)D_J\right)\right|_{a_n=0}= \\
=\pi_n^*\left(\sum_{i=1}^{n-1}\frac{a_i^2\psi^\dagger_i}{2}-\sum_{\substack{J\subset\{1,2,\ldots,n-1\}\\|J|\ge 2}}\left(\sum_{i,j\in J, i<j}a_ia_j\right)D_J\right).
\end{multline*}
Therefore, if we set $a_n=0$ on the right-hand side of \eqref{formula: Hain's formula} and push it forward to $\mcM_{g,n-1}^{\rm rt}$, we get zero. Hence, the second statement of the lemma is also proved.
\end{proof}

\subsection{DR-cycles and the tautological ring of $\mcM_{g,n}$}\label{subsection: DR-cycles and the tautological ring}

\begin{lemma}\label{lemma: DR-cycles span the ring}
Let $n\ge 1$ and $1\le k\le g$. The group $R^{g-k}(\mcM_{g,n})$ is spanned by double ramification cycles of the form
\begin{gather}\label{special form}
DR_g\left(\prod_{i=1}^{n-1}m_{a_i}m_{-d}\prod_{j=1}^k\m_{b_j}\right),
\end{gather}
where $a_1,\ldots,a_{n-1},b_1,\ldots,b_k$ and $d$ are positive integers such that $a_1+\ldots+a_{n-1}+b_1+\ldots+b_k=d$.
\end{lemma}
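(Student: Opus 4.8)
The plan is to combine Hain's formula with a polynomial-interpolation (Vandermonde) argument, reducing the statement to the already-available fact that double ramification cycles span the tautological groups. First I would record the key polynomiality. By the definition of the classes with forgotten points, $DR_g(\prod_{i=1}^n m_{a_i}\prod_{j=1}^k\m_{b_j})$ is the restriction to $\mcM_{g,n}$ of $\pi_*$ of the all-solid cycle $DR_g(\prod_{i=1}^n m_{a_i}\prod_{j=1}^k m_{b_j})$ on $\oM_{g,n+k}$, where $\pi$ forgets the last $k$ points. Since the preimage $\pi^{-1}(\mcM_{g,n})$ lies in $\mcM^{\rm rt}_{g,n+k}$, Hain's formula~\eqref{formula: Hain's formula} applies to the integrand, and---exactly as in the proof of Lemma~\ref{lemma: divisible}---it follows that $DR_g(\prod_{i=1}^n m_{a_i}\prod_{j=1}^k\m_{b_j})$ is a polynomial of degree $2g$ in the integers $a_1,\dots,a_n,b_1,\dots,b_k$ with coefficients in $R^{g-k}(\mcM_{g,n})$.

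Next I would use the constraint $\sum a_i+\sum b_j=0$. I set $a_n:=-\sum_{i=1}^{n-1}a_i-\sum_{j=1}^k b_j$ and regard the class above as a single polynomial $P(a_1,\dots,a_{n-1},b_1,\dots,b_k)$ in the $n+k-1$ free variables, with values in $R^{g-k}(\mcM_{g,n})$. The central observation is that evaluating $P$ at a point with all free variables positive produces precisely a cycle of the required special form: the points $a_1,\dots,a_{n-1},b_1,\dots,b_k$ are positive, the remaining solid point carries $a_n=-d$ with $d=\sum_{i=1}^{n-1}a_i+\sum_{j=1}^k b_j>0$, and the forgotten points are positive. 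Thus the span of all special-form cycles equals the span of the values $\{P(v):v\in\Z_{>0}^{\,n+k-1}\}$.

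Finally, interpolation closes the gap. A vector-valued polynomial of bounded degree has the property that its values on the positive-integer lattice span the same subspace as the full collection of its coefficients, and hence the same subspace as its values on all of $\Z^{n+k-1}$; this is the standard finite-difference/Vandermonde argument, valid because distinct monomials are separated by sufficiently many positive-integer sample points. The values of $P$ over all integer points are exactly the cycles $DR_g(\prod_{i=1}^n m_{a_i}\prod_{j=1}^k\m_{b_j})$ with arbitrary multiplicities, and, as recalled in the introduction, double ramification cycles span the tautological groups; since codimension $g-k$ forces exactly $k$ forgotten and $n$ solid points, cycles of this shape already suffice to span $R^{g-k}(\mcM_{g,n})$. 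Therefore the special-form cycles span $R^{g-k}(\mcM_{g,n})$.

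I expect the main obstacle to be the spanning input itself. Pushing Hain's formula forward only manifestly produces classes with at most $k$ factors of $\kappa$ (one per forgotten point), whereas a priori $R^{g-k}$ is generated by $\psi$--$\kappa$ monomials with up to $g-k$ such factors; reconciling this---i.e. knowing that cycles of this shape genuinely span---relies on the nontrivial reduction relations behind the statement that DR-cycles span the tautological ring (and on the relations of Lemmas~\ref{lemma: intersection with psi-class} and~\ref{lemma: running point} if one prefers a self-contained derivation). The interpolation step and the sign bookkeeping are routine by comparison.
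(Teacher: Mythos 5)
Your reduction step is genuinely nice: combining Hain's formula~\eqref{formula: Hain's formula} (via the rational-tails argument of Lemma~\ref{lemma: divisible}) with a Vandermonde/finite-difference interpolation does show that the cycles of the special form~\eqref{special form} span the same subspace as \emph{all} cycles $DR_g\left(\prod_{i=1}^n m_{a_i}\prod_{j=1}^k\m_{b_j}\right)$ with arbitrary integer multiplicities. But this is only the normalization of the sign pattern, which is the easy half. The argument then rests entirely on the input you cite in your last step: that DR-cycles with $n$ solid and $k$ forgotten points span $R^{g-k}(\mcM_{g,n})$. That statement is not available as a black box here --- it \emph{is} the substance of Lemma~\ref{lemma: DR-cycles span the ring}. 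The remark in the introduction that you invoke is an informal summary whose justification, within this paper, is this very lemma; and the literature version (\cite{Ion02}, Corollary 2.5) concerns a different construction (admissible coverings rather than rubber stable maps), and the paper explicitly lists the needed adaptations (rubber-map DR-cycles, restriction to the open moduli space, one negative multiplicity) as what must be proved. So as written, your proposal is circular: it derives the lemma from a spanning statement that the lemma itself is supposed to establish. You flag this obstacle honestly, but flagging it does not close it, and Lemmas~\ref{lemma: intersection with psi-class} and~\ref{lemma: running point} alone do not close it either, since Lemma~\ref{lemma: intersection with psi-class} only governs multiplication by $\psi_1$ on the open part, not an arbitrary $\psi$-monomial upstairs.

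For contrast, the paper's proof never needs the all-multiplicities spanning statement as an input. It starts from the tautological-ring definition: $R^{g-k}(\mcM_{g,n})$ is spanned by restrictions of push-forwards of $\psi$-monomials from $\oM_{g,n+l}$. It writes the fundamental class of $\oM_{g,n+l}$ as the degree-zero cycle $\frac{1}{g!\prod_j b_j^2}DR_g\left(\prod_{i=1}^{n+l}m_{a_i}\prod_{j=1}^g\m_{b_j}\right)$, chosen so that $a_n$ is the unique negative index, lifts the $\psi$-monomial to $\oM_{g,n+l+g}$, and intersects step by step using \cite[Theorem 4]{BSSZ12}. The key observation is that after push-forward and restriction to $\mcM_{g,n}$ only degenerations with a genus-$g$ component survive, forcing $p=0$ and the other component to have genus $0$; hence the structure ``DR-cycle with exactly one negative index, at the $n$-th point'' is preserved at every stage, including under the forgetful push-forwards. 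In other words, the sign normalization you obtain by interpolation comes for free in the paper's inductive computation, while the spanning itself --- the part your proposal defers --- is exactly what that computation proves.
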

\begin{proof}
This lemma is a version of~\cite[Corollary 2.5]{Ion02} adapted to our situation (we are using the DR-cycles defined by ``rubber'' stable relative maps rather than the admissible coverings, we want to have the DR-cycles with exactly one negative multiplicity, and we consider the restriction of the DR-cycles to the open part of the moduli space. 

We want to show that the push-forward of any monomial of $\psi$-classes on $\oM_{g,n+l}$ to the space $\oM_{g,n}$, and further restricted to its open part, is expressed in terms of the DR-cycles of the form~\eqref{special form}. 

We represent the degree zero class $g!\prod_{i=1}^g b_i^2$ on $\oM_{g,n+l}$ as a DR-cycle 
$$
DR_g\left(\prod_{i=1}^{n+l}m_{a_i}\prod_{j=1}^g\m_{b_j}\right),
$$
where $a_n$ is the only one negative index. Obviously, we can choose the multiplicities in this way.
Then we lift the monomial of psi-classes to the moduli space $\oM_{g,n+l+g}$ (as we did in the proof of Lemma~\ref{lemma: intersection with psi-class} above) and intersect it there with the DR-cycle $DR_g\left(\prod_{i=1}^{n+l}m_{a_i}\prod_{j=1}^g m_{b_j}\right)$ using~\cite[Theorem 4]{BSSZ12}. 
Then we apply the push-forward to the space $\oM_{g,n}$ and restrict it to the open moduli space $\mcM_{g,n}$. 

Let us discuss the structure of the class that we obtain. First of all, since we are interested only in the non-degenerate restricitons to the open moduli space, one of the components in the degeneration formula 
\[
DR_{g_1}\left(\prod_{i\in I} m_{a_i}\prod_{j\in J} m_{b_j}\prod_{l=1}^p m_{-c_k}\right)
\boxtimes
DR_{g_2}\left(\prod_{i\in I'} m_{a_i}\prod_{j\in J'} m_{b_j}\prod_{l=1}^p m_{c_k}\right),
\]
must be of genus $g$. This implies, exactly as in the proofs of Lemmas~\ref{lemma: intersection with psi-class} and~\ref{lemma: running point}, that $p=0$ and the other component has genus $0$. This means that on the component of genus~$g$ we again have the structure of a DR-cycle with exactly one negative index. So, if we are interested only in the terms that can be non-trivially restricted to the open part of the moduli space, then this property of a DR-cycle is preserved throughout the computation of the monomial of $\psi$-classes. It is also preserved under the push-forward that forgets some of the marked points. This implies that the expression that we obtain for a push-forward of a monomial of $\psi$-classes is a linear combination of DR-cycles with one negative index. It is also easy to see that the only negative index in these cycles corresponds to the $n$-th marked point. This is precisely the statement of the lemma. 
\end{proof}


\section{Technical lemmas}\label{section: technical lemmas}

Here we collect several linear algebra lemmas that we will use in Section~\ref{section: main proofs}. 

\begin{lemma}\label{lemma: linear system}
Let $V$ be a vector space and $x_1,x_2,\ldots,x_{p-2}\in V$, where $p\ge 3$. Suppose that for any positive integers $\lambda_1,\lambda_2,\lambda_3$ such that $\lambda_1+\lambda_2+\lambda_3=p$ we have
\begin{gather*}
x_{\lambda_1}+x_{\lambda_2}+x_{\lambda_3}=0.
\end{gather*}
Then there exists a vector $\alpha\in V$ such that $x_i=\left(\frac{i}{p}-\frac{1}{3}\right)\alpha$.
\end{lemma}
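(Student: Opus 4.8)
The plan is to show first that the sequence $(x_i)$ is an arithmetic progression, and then to pin down its common difference and initial term using one special relation. The guiding observation is that substituting the conjectured answer $x_i=(i/p-1/3)\alpha$ into the hypothesis yields $\left(\tfrac1p\sum\lambda_i-1\right)\alpha=0$, which is automatic; so the hypotheses should carry exactly enough information to recover $\alpha=p(x_2-x_1)$ and nothing more.

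First I would derive that all consecutive differences $x_{i+1}-x_i$ coincide. Fix $j$ with $1\le j\le p-3$ and compare the two admissible triples $(1,j+1,p-j-2)$ and $(2,j,p-j-2)$; in this range both consist of positive integers summing to $p$. The hypothesis gives $x_1+x_{j+1}+x_{p-j-2}=0$ and $x_2+x_j+x_{p-j-2}=0$, and subtracting cancels the common term $x_{p-j-2}$, leaving $x_{j+1}-x_j=x_2-x_1$. Writing $c:=x_2-x_1$, this holds for every $j$ in range, so telescoping gives $x_i=x_1+(i-1)c$ for all $i=1,\dots,p-2$.

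Next I would fix the two remaining free constants $x_1$ and $c$ using the boundary relation coming from the triple $(1,1,p-2)$, namely $2x_1+x_{p-2}=0$. Substituting $x_{p-2}=x_1+(p-3)c$ gives $3x_1+(p-3)c=0$, hence $x_1=-\tfrac{p-3}{3}c$. Plugging back, $x_i=-\tfrac{p-3}{3}c+(i-1)c=\tfrac{3i-p}{3}c$. Setting $\alpha:=pc=p(x_2-x_1)$ rewrites this as $x_i=\tfrac{3i-p}{3p}\alpha=\left(\tfrac{i}{p}-\tfrac13\right)\alpha$, as claimed.

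The only case needing separate treatment is $p=3$, where there is a single vector $x_1$ and the unique relation is $3x_1=0$; then $x_1=0=\left(\tfrac13-\tfrac13\right)\alpha$ for any $\alpha$ (take $\alpha=0$). There is no genuine obstacle in the argument: everything reduces to selecting the right pair of triples to subtract. The one point to watch is the admissibility of the indices, that is, keeping all three entries positive and within $\{1,\dots,p-2\}$, which is precisely what confines $j$ to $1\le j\le p-3$ and makes the telescoping reach exactly $x_{p-2}$.
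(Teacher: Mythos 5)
Your proof is correct and takes essentially the same route as the paper's: both extract the arithmetic-progression property $x_{j+1}-x_j=x_2-x_1$ by subtracting two relations sharing a common term, then determine $x_1$ from the relation $2x_1+x_{p-2}=0$, which yields exactly $\alpha=p(x_2-x_1)$.
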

\begin{proof}
If $p=3$ the lemma is obvious. Suppose $p\ge 4$. We have 
$$
x_i+x_{p-1-i}=-x_1, \text{ for $i=1,2,\ldots,p-2$}.
$$
On the other hand
$$
x_i+x_{p-2-i}=-x_2, \text{ for $i=1,2,\ldots,p-3$}.
$$
Let us subtract the second equation from the first one. We obtain
$$
x_{p-1-i}-x_{p-2-i}=x_2-x_1, \text{ for $i=1,2,\ldots,p-3$}.
$$
Hence, $x_i=x_1+(i-1)(x_2-x_1)$. Let us substitute this formula in the equation $2x_1+x_{p-2}=0$, we get $x_1=-\frac{(p-3)(x_2-x_1)}{3}$. Therefore, $x_i=(i-\frac{p}{3})(x_2-x_1)$. The lemma is proved.
\end{proof}

\begin{lemma}\label{lemma: linear system2}
Let $V$ be a vector space. Suppose that we have vectors $v_{i,j}\in V, i,j\ge 1$. Define the vectors $z_{i,j}$ by
$$
z_{i,j}:=-v_{i,j}-v_{j,i}.
$$ 

Suppose that the vectors $v_{i,j}$ and $z_{i,j}$ satisfy the following system:
\begin{gather}\label{formula: first system}
\left\{
\begin{aligned}
&\sum_{\substack{\{i,j,k\}=\{1,2,3\}\\i<j}}(v_{a_k,a_i+a_j}+z_{a_i,a_j})=0,             & & \text{ if $a_1,a_2,a_3\ge 1$};\\
&z_{a_1,a_2}-z_{c_1,c_2}-\sum_{a_i>c_j}v_{c_j,a_i-c_j}+\sum_{c_i>a_j}v_{a_j,c_i-a_j}=0, & & \text{ if $a_1+a_2=c_1+c_2$}.
\end{aligned}
\right.
\end{gather}

Then there exist vectors $\alpha_2,\alpha_3,\ldots\in V$ such that 
\begin{gather}\label{formula: solution2}
v_{i,j}=\left(\frac{i}{i+j}-\frac{1}{3}\right)\alpha_{i+j}+\left(\frac{1}{3}-\frac{i+j}{i}\right)\alpha_i+\frac{1}{3}\alpha_j.
\end{gather}
Here we, by definition, put $\alpha_1:=0$.
\end{lemma}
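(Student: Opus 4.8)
The plan is to induct on the total index $p:=i+j$, building the vectors $\alpha_p$ one at a time so that formula \eqref{formula: solution2} holds for all pairs $(i,j)$ with $i+j=p$. For a fixed $p$ I abbreviate $w_i:=v_{i,p-i}$, $1\le i\le p-1$, so that the vectors of index sum $p$ are exactly $w_1,\dots,w_{p-1}$; I also write $c_i:=\big(\tfrac13-\tfrac{p}{i}\big)\alpha_i+\tfrac13\alpha_{p-i}$ for the part of the target formula involving only $\alpha$'s of index $<p$, so that the assertion ``\eqref{formula: solution2} holds for index sum $p$'' becomes $w_i=\big(\tfrac{i}{p}-\tfrac13\big)\alpha_p+c_i$. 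For the base case $p=2$ there is one vector $w_1=v_{1,1}$ and no nontrivial instance of \eqref{formula: first system}, so I just set $\alpha_2:=6\,v_{1,1}$. Assume now $p\ge3$ and that $\alpha_2,\dots,\alpha_{p-1}$ have been produced with \eqref{formula: solution2} valid for all index sums $<p$; since $z_{a,b}=-v_{a,b}-v_{b,a}$, this in particular gives an explicit formula for every $z_{a,b}$ with $a+b<p$.

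I first treat the interior vectors $w_1,\dots,w_{p-2}$, which are exactly those appearing in the first relation of \eqref{formula: first system}. For $a_1+a_2+a_3=p$ each of $v_{a_3,a_1+a_2},v_{a_2,a_1+a_3},v_{a_1,a_2+a_3}$ equals one of $w_{a_1},w_{a_2},w_{a_3}$, so that relation reads $w_{a_1}+w_{a_2}+w_{a_3}=-(z_{a_1,a_2}+z_{a_1,a_3}+z_{a_2,a_3})$. Every $z$ on the right has index sum $<p$, hence is known, and collecting the coefficient of each $\alpha_{a_i}$ and of each $\alpha_{p-a_i}$ shows that the right-hand side equals $c_{a_1}+c_{a_2}+c_{a_3}$. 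Therefore the reduced vectors $\tilde w_i:=w_i-c_i$ satisfy $\tilde w_{a_1}+\tilde w_{a_2}+\tilde w_{a_3}=0$ for all positive $a_1,a_2,a_3$ summing to $p$, and Lemma \ref{lemma: linear system} produces a vector $\alpha_p$ with $\tilde w_i=\big(\tfrac{i}{p}-\tfrac13\big)\alpha_p$. This is precisely \eqref{formula: solution2} for $v_{i,p-i}$ with $1\le i\le p-2$.

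It remains to pin down the single boundary vector $w_{p-1}=v_{p-1,1}$, which never occurs in the first relation. When $p=3$ the interior relation forces $\tilde w_1=0$ but leaves $\alpha_3$ undetermined, so I simply set $\alpha_3$ to make \eqref{formula: solution2} hold for $v_{2,1}$; the only second-relation instance of index sum $3$ relates $z_{1,2}$ to $z_{2,1}=z_{1,2}$ and holds automatically, so this choice violates nothing. For $p\ge4$ I instead invoke the second relation of \eqref{formula: first system} with $(a_1,a_2)=(p-1,1)$ and an interior decomposition $(c_1,c_2)=(c,p-c)$, $2\le c\le p-2$. Expanding the two $v$-sums turns this relation into an expression for $z_{p-1,1}$ in terms of $z_{c,p-c}$ (an interior $z$, already known from the previous paragraph) and finitely many $v$'s of index sum $<p$ (known by induction). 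Hence $z_{p-1,1}$, and with it $v_{p-1,1}=-z_{p-1,1}-w_1$, is determined, and substituting the established formulas yields \eqref{formula: solution2} for $v_{p-1,1}$.

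The genuinely hard part is this last verification. The identity $-(z_{a_1,a_2}+z_{a_1,a_3}+z_{a_2,a_3})=c_{a_1}+c_{a_2}+c_{a_3}$ used for the interior vectors is a routine coefficient comparison, and the appeal to Lemma \ref{lemma: linear system} is then immediate. By contrast, reconciling the boundary value forced by the second relation with the normalization $\big(\tfrac{p-1}{p}-\tfrac13\big)\alpha_p+\big(\tfrac13-\tfrac{p}{p-1}\big)\alpha_{p-1}$ predicted by \eqref{formula: solution2} requires careful bookkeeping of the two $v$-sums and, crucially, the cancellation of all contributions of $\alpha_2,\dots,\alpha_{p-2}$. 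I have checked this cancellation directly in the first nontrivial case $p=4$, $c=2$, where it comes out cleanly, which makes me confident the same mechanism works in general; carrying it out for arbitrary $p$ is the main computational obstacle.
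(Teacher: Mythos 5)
Your proof follows exactly the paper's route: induction on $p=i+j$ with $\alpha_2=6v_{1,1}$, Lemma~\ref{lemma: linear system} applied to the reduced interior vectors $\widetilde v_{i,p-i}$, $i\le p-2$, and the second relation of \eqref{formula: first system} (taken with both index pairs of sum $p$) used to pin down the boundary vector $v_{p-1,1}$; even the separate treatment of $p=3$ matches. The interior part of your argument is complete and correct.

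The one thing separating your text from a proof is the step you flag yourself: verifying for general $p$ that the value of $v_{p-1,1}$ forced by the second relation agrees with \eqref{formula: solution2}. This verification does hold, and the bookkeeping is lighter than you expect if you normalize all vectors of index sum $p$ at once, as the paper does: for $i+j=p$ set $\widetilde v_{i,j}:=v_{i,j}-\left(\frac13-\frac{p}{i}\right)\alpha_i-\frac13\alpha_j$ and $\widetilde z_{i,j}:=-\widetilde v_{i,j}-\widetilde v_{j,i}$. Substituting the induction hypothesis into the second relation shows, by one uniform coefficient check of the same kind as your interior identity, that it collapses to $\widetilde z_{a_1,a_2}=\widetilde z_{c_1,c_2}$ whenever $a_1+a_2=c_1+c_2=p$. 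Your interior step gives $\widetilde v_{i,p-i}=\left(\frac{i}{p}-\frac13\right)\alpha_p$ for $i\le p-2$, hence $\widetilde z_{2,p-2}=-\widetilde v_{2,p-2}-\widetilde v_{p-2,2}=-\frac13\alpha_p$, and therefore
\begin{align*}
\widetilde v_{p-1,1}=-\widetilde z_{1,p-1}-\widetilde v_{1,p-1}=-\widetilde z_{2,p-2}-\widetilde v_{1,p-1}=\frac13\alpha_p-\left(\frac1p-\frac13\right)\alpha_p=\left(\frac{p-1}{p}-\frac13\right)\alpha_p,
\end{align*}
which is \eqref{formula: solution2} for $v_{p-1,1}$. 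Equivalently, your direct route with $(c_1,c_2)=(2,p-2)$ also closes for every $p\ge 4$: in $z_{p-1,1}=z_{2,p-2}+v_{2,p-3}+v_{p-2,1}-v_{1,1}-v_{1,p-3}$ the coefficients of $\alpha_{p-2}$, $\alpha_{p-3}$ and $\alpha_2$ all cancel after substituting the known formulas, leaving $z_{p-1,1}=-\frac13\alpha_p+\left(\frac{p}{p-1}-\frac23\right)\alpha_{p-1}$, and hence $v_{p-1,1}=-z_{p-1,1}-v_{1,p-1}=\left(\frac{p-1}{p}-\frac13\right)\alpha_p+\left(\frac13-\frac{p}{p-1}\right)\alpha_{p-1}$, as required. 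So the approach is sound and identical to the paper's; what is missing is only that this last computation be written out for arbitrary $p$ rather than extrapolated from the case $p=4$.
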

\begin{proof}
Obviously, $\alpha_2=6v_{1,1}$. Suppose $d\ge 3$ and we have found vectors $\alpha_2,\ldots,\alpha_{d-1}$ such that formula \eqref{formula: solution2} holds for $i+j\le d-1$. Let us construct a vector $\alpha_d$ such that equation~\eqref{formula: solution2} holds for $i+j=d$. For $i+j=d$, define the vectors $\widetilde v_{i,j}$ and $\widetilde z_{i,j}$ by
\begin{align*}
&\widetilde v_{i,j}:=v_{i,j}-\left(\frac{1}{3}-\frac{d}{i}\right)\alpha_i-\frac{1}{3}\alpha_j,\\
&\widetilde z_{i,j}:=-\widetilde v_{i,j}-\widetilde v_{j,i}.
\end{align*} 
It is easy to check that from \eqref{formula: first system} it follows that
\begin{gather*}
\left\{
\begin{aligned}
&\sum_{\substack{\{i,j,k\}=\{1,2,3\}\\i<j}}\widetilde v_{a_k,a_i+a_j}=0,             & & \text{ if $a_1+a_2+a_3=d$};\\
&\widetilde z_{a_1,a_2}-\widetilde z_{c_1,c_2}=0, & & \text{ if $a_1+a_2=c_1+c_2=d$}.
\end{aligned}
\right.
\end{gather*}
If $d=3$, then we can take $\alpha_3=3\widetilde v_{2,1}$. Suppose $d\ge 4$. By Lemma~\ref{lemma: linear system}, there exists a vector $\alpha_d$ such that $\widetilde v_{i,d-i}=\left(\frac{i}{d}-\frac{1}{3}\right)\alpha_d$, for $i\le d-2$. Since $\widetilde z_{1,d-1}=\widetilde z_{2,d-2}$, we also have $\widetilde v_{d-1,1}=\left(\frac{d-1}{d}-\frac{1}{3}\right)\alpha_d$. The lemma is proved. 
\end{proof}

\begin{lemma}\label{lemma: linear system3}

Let $V$ be a vector space and $d\ge 3$. Suppose we have vectors $v_{i,j,k}\in V$, where $i+j+k=d$ and $i,j,k\ge 1$, that satisfy the following system of equations:
\begin{align}
&\sum_{\substack{\{i,j,k\}=\{1,2,3\}\\i<j}}v_{b_k,b_i+b_j,a}=0,\text{ if $b_1+b_2+b_3+a=d$ and $a,b_i\ge 1$};\label{first equation}\\
&k v_{i,j,k}+j v_{i,k,j}=0.\label{second equation}
\end{align}

If $d=3,4$, then $v_{i,j,k}=0$. If $d\ge 5$, then there exists a vector $\alpha\in V$ such that
$$
v_{i,j,k}=\left(\frac{i}{d-1}-\frac{1}{3}\right)\left(\delta_{k,1}-\frac{1}{k}\delta_{j,1}\right)\alpha.
$$
\end{lemma}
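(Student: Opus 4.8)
The plan is to slice the family $\{v_{i,j,k}\}$ by its third index, apply Lemma~\ref{lemma: linear system} to each slice, and then use the swap relation~\eqref{second equation} both to glue the slices together and to annihilate all but one of them.

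First I would record two immediate consequences of~\eqref{second equation}: setting $j=k$ gives $v_{i,j,j}=0$, and setting $j=k=1$ gives $v_{i,1,1}=0$. Next, for every $a$ with $d-a\ge 3$ I fix the third index equal to $a$ and set $y^{(a)}_i:=v_{i,\,d-a-i,\,a}$ for $1\le i\le d-a-2$. Equation~\eqref{first equation}, read for a fixed value of $a$, states exactly that $y^{(a)}_{b_1}+y^{(a)}_{b_2}+y^{(a)}_{b_3}=0$ whenever $b_1+b_2+b_3=d-a$ with $b_i\ge 1$, so Lemma~\ref{lemma: linear system} (with $p=d-a$) yields a vector $\alpha^{(a)}\in V$ such that
\[
v_{i,j,a}=\left(\frac{i}{d-a}-\frac13\right)\alpha^{(a)}\qquad\text{for all }i+j=d-a,\ j\ge 2.
\]

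Then I would feed these slice formulas into~\eqref{second equation}. For $j,k\ge 2$ one automatically has $k\le d-3$ and $j\le d-3$, so both $v_{i,j,k}$ and $v_{i,k,j}$ are governed by the slice formula and~\eqref{second equation} reads $k(\frac{i}{d-k}-\frac13)\alpha^{(k)}+j(\frac{i}{d-j}-\frac13)\alpha^{(j)}=0$. Using the identity $\frac{i}{d-k}-\frac13=\frac{2i-j}{3(i+j)}$ (legitimate since $d-k=i+j$), this becomes
\[
\frac{k(2i-j)}{i+j}\,\alpha^{(k)}+\frac{j(2i-k)}{i+k}\,\alpha^{(j)}=0,\qquad i=d-j-k\ge 1.
\]
The diagonal instance $j=k=2$ requires $i=d-4\ge 1$ and has a nonzero coefficient exactly when $d\ne 5$, hence gives $\alpha^{(2)}=0$ as soon as $d\ge 6$; substituting $j=2$ into the displayed relation then kills $\alpha^{(a)}$ for every $3\le a\le d-4$, the surviving coefficient $2a(i-1)/(i+2)$ being nonzero precisely because $a\le d-4$ forces $i=d-a-2\ge 2$. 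I expect this to be the main obstacle: organizing the applications of~\eqref{second equation} so that at each stage the relevant coefficient does not vanish, and carefully separating the small values of $d$ at which a slice degenerates.

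Finally I would assemble the answer. For $d\ge 5$ I put $\alpha:=\alpha^{(1)}$ (the slice $a=1$ is available since $d-1\ge 3$). The case $k=1,\ j\ge 2$ is the slice formula itself; the case $j=1,\ k\ge 2$ follows from~\eqref{second equation} via $v_{i,1,k}=-\frac1k v_{i,k,1}=-\frac1k(\frac{i}{d-1}-\frac13)\alpha$; the case $j=k=1$ is $v_{d-2,1,1}=0$; and for $j,k\ge 2$ the class vanishes because either $k\le d-4$, so $\alpha^{(k)}=0$, or $k=d-3$, which forces $(i,j)=(1,2)$ where the slice coefficient $\frac13-\frac13$ is zero. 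Matching these four cases against $(\frac{i}{d-1}-\frac13)(\delta_{k,1}-\frac1k\delta_{j,1})\alpha$ term by term gives the stated formula. For $d=3$ only the triple $(1,1,1)$ occurs and $v_{1,1,1}=0$ by~\eqref{second equation}; for $d=4$ the only slice $a=1$ has its single value $v_{1,2,1}$ with vanishing coefficient, while $v_{2,1,1}=0$ and $v_{1,1,2}=-\frac12 v_{1,2,1}=0$ by~\eqref{second equation}, so every $v_{i,j,k}=0$.
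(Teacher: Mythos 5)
Your proof is correct and follows essentially the same route as the paper: slice the family by the third index, apply Lemma~\ref{lemma: linear system} to each slice to get vectors $\alpha^{(a)}$, and use the swap relation~\eqref{second equation} to kill every slice vector except the one for $a=1$, which becomes the $\alpha$ in the statement. The only difference is bookkeeping in the killing step: the paper uses the diagonal relations $v_{i,k,k}=0$ for each $2\le k\le\frac{d-1}{2}$, which forces a separate patch for the exceptional case $d=\tfrac{5}{2}k$ and a direct computation for $d=5$, whereas your propagation from $\alpha^{(2)}$ via cross-slice instances of~\eqref{second equation} avoids both exceptions — a modest but genuine streamlining of the same argument.
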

\begin{proof}
The cases $d=3,4,5$ can be easily done by a direct computation. 

Suppose that $d\ge 6$. By Lemma \ref{lemma: linear system}, there exist vectors $\alpha_3,\alpha_4,\ldots,\alpha_{d-1}\in V$ such that
\begin{gather*}
v_{i,j,k}=\left(\frac{i}{i+j}-\frac{1}{3}\right)\alpha_{i+j}, \text{ if $j\ge 2$}.
\end{gather*}

Let us prove that 
\begin{gather}\label{formula: tmp3}
\alpha_{d-k}=0,\text{ if $2\le k\le\frac{d-1}{2}$}.
\end{gather}
From \eqref{second equation} it follows that $v_{i,j,k}=0$, if $j=k$. Therefore, if $2\le k\le\frac{d-1}{2}$, then $\left(\frac{d-2k}{d-k}-\frac{1}{3}\right)\alpha_{d-k}=0$. If $d\ne \frac{5}{2}k$, then $\alpha_{d-k}=0$. Suppose that $d=\frac{5}{2}k$. We have $v_{\frac{k}{2}+1,k-1,k}=-\frac{k-1}{k}v_{\frac{k}{2}+1,k,k-1}=0$. On the other hand, $v_{\frac{k}{2}+1,k-1,k}=\left(\frac{\frac{k}{2}+1}{d-k}-\frac{1}{3}\right)\alpha_{d-k}$. The coefficient of $\alpha_{d-k}$ here is not equal to zero, therefore, $\alpha_{d-k}=0$. Thus, \eqref{formula: tmp3} is proved.

We see that $v_{i,j,k}=0$, if $j\ge 2$ and $2\le k\le\frac{d-1}{2}$. If we apply \eqref{second equation}, we get that $v_{i,j,k}=0$, if $k\ge 2$ and $2\le j\le\frac{d-1}{2}$, thus, $v_{i,j,k}=0$, if $j\ge 2$ or $k\ge 2$.

If $2\le j\le d-2$, then $v_{d-j-1,j,1}=\left(\frac{d-j-1}{d-1}-\frac{1}{3}\right)\alpha_{d-1}$ and $v_{d-j-1,1,j}=-\frac{1}{j}v_{d-j-1,1,j}=-\frac{1}{j}\left(\frac{d-j-1}{d-1}-\frac{1}{3}\right)\alpha_{d-1}$. Also we have $v_{d-2,1,1}=0$. This completes the proof of the lemma.
\end{proof}


\section{Proofs of Lemma~\ref{lemma: symmetry} and Propositions~\ref{proposition: first step} and~\ref{proposition: second step}}\label{section: main proofs}

In this last section we prove Lemma~\ref{lemma: symmetry}, Proposition~\ref{proposition: first step} and Proposition~\ref{proposition: second step}.

\subsection{Proof of Lemma \ref{lemma: symmetry}}\label{subsection: symmetry}

Without loss of generality, we can assume that $k=1$ and $l=2$. It is sufficient to prove that
\begin{gather}\label{formula1}
\alpha+i_{1,2}^*\alpha\in\pi_1^*(R^{g-1}(\mcM_{g,n-1}))+\pi_2^*(R^{g-1}(\mcM_{g,n-1})),
\end{gather} 
for any $\alpha\in R^{g-1}(\mcM_{g,n})$. By Lemma~\ref{lemma: DR-cycles span the ring}, we can assume that $\alpha=DR_g\left(\prod_{i=1}^n m_{a_i}\m_b\right)$. From Lemma \ref{lemma: running point} it follows that 
\begin{align*}
&DR_g\left(m_{a_1}\m_b m_{a_2}\prod_{i=3}^n m_{a_i}\right)+\frac{b}{a_1}DR_g\left(m_b\m_{a_1}m_{a_2}\prod_{i=3}^n m_{a_i}\right)\in\pi_1^*(R^{g-1}(\mcM_{g,n-1})),\\
&\frac{b}{a_1}DR_g\left(m_b m_{a_2}\m_{a_1}\prod_{i=3}^n m_{a_i}\right)+\frac{b}{a_2}DR_g\left(m_b m_{a_1}\m_{a_2}\prod_{i=3}^n m_{a_i}\right)\in\pi_2^*(R^{g-1}(\mcM_{g,n-1})),\\
&\frac{b}{a_2}DR_g\left(m_b\m_{a_2}m_{a_1}\prod_{i=3}^n m_{a_i}\right)+DR_g\left(m_{a_2}\m_b m_{a_1}\prod_{i=3}^n m_{a_i}\right)\in\pi_1^*(R^{g-1}(\mcM_{g,n-1})).
\end{align*}
If we sum the first and the third rows and subtract the second row, we get
\begin{multline*}
DR_g\left(m_{a_1}m_{a_2}\m_b\prod_{i=3}^n m_{a_i}\right)+DR_g\left(m_{a_2} m_{a_1}\m_b\prod_{i=3}^n m_{a_i}\right)\in\\
\in\pi_1^*(R^{g-1}(\mcM_{g,n-1}))+\pi_2^*(R^{g-1}(\mcM_{g,n-1})).
\end{multline*}
The lemma is proved.


\subsection{Proof of Proposition~\ref{proposition: first step}}\label{subsection: first step}

Suppose $n=1$. We know that $R^{g-1}(\mcM_{g,1})=\mbQ$, $R^{g-2}(\mcM_g)=\mbQ$ and that $R^{g-2}(\mcM_g)$ is spanned by $\kappa_{g-2}$ (see \cite{Loo95}). Therefore, it is sufficient to prove that $\psi_1\pi_1^*(\kappa_{g-2})\ne 0$. It is true, because $(\pi_1)_*(\psi_1\pi_1^*(\kappa_{g-2}))=(2g-2)\kappa_{g-2}$. 

Suppose that $n\ge 2$. We denote by $K$ the subspace of $R^{g-1}(\mcM_{g,n})$ defined by 
$$
K:=\pi_1^*(R^{g-1}(\mcM_{g,n-1}))+\psi_1\pi_1^*(R^{g-2}(\mcM_{g,n-1})).
$$
By Lemma~\ref{lemma: DR-cycles span the ring}, it is sufficient to prove that
\begin{gather}\label{formula: first step}
DR_g\left(\m_b\prod_{i=1}^{n-1}m_{a_i}m_{-d}\right)\in K,
\end{gather}
where $a_1,\ldots,a_{n-1},b,d$ are positive integers such that $a_1+\ldots+a_{n-1}+b=d$. We do it by double induction on $d$ and on $\td:=b+a_1$. If $\td=2$, then $b=a_1=1$, and from Lemma~\ref{lemma: running point} it immediately follows that
\begin{gather}\label{formula: base}
DR_g\left(\m_1 m_1\prod_{i=2}^n m_{a_n}m_{-d}\right)\in\pi_1^*(R^{g-1}(\mcM_{g,n-1})).
\end{gather}

Suppose that $\td\ge 3$. Consider positive integers $b_1,b_2,b_3$ such that $b_1+b_2+b_3=\td$. By Lemma~\ref{lemma: running point}, we have
$$
\sum_{i=1}^3 b_i DR_g\left(m_{b_i}\prod_{j\ne i}\m_{b_j}\prod_{l=2}^{n-1} m_{a_l}m_{-d}\right)\in\pi_1^*(R^{g-2}(\mcM_{g,n-1})).
$$
Let us multiply the both sides of this formula by $\psi_1$. Using Lemmas \ref{lemma: intersection with psi-class}, \ref{lemma: running point} and the induction assumption we get
\begin{gather}\label{formula: relation1}
\sum_{i=1}^3 \frac{\td+(r-3)b_i}{r}DR_g\left(\m_{\td-b_i}m_{b_i}\prod_{j=2}^{n-1}m_{a_j}m_{-d}\right)\in K,
\end{gather}
where $r=2g+n$. 

Let us fix $a_2,a_3,\ldots,a_n$ and analyze relation~\eqref{formula: relation1}. Let 
$$
u_i:=DR_g\left(\m_{\td-i}m_i\prod_{j=2}^{n-1}m_{a_j}m_{-d}\right), \text{ for $i=1,2,\ldots,\td-1$}.
$$
For any two classes $\alpha,\beta\in R^{g-1}(\mcM_{g,n})$, we will write $\alpha\stackrel{\mod K}{=}\beta$, if $\alpha-\beta\in K$. 

From \eqref{formula: relation1} and Lemma \ref{lemma: linear system} it follows that there exists a class $\alpha\in R^{g-1}(\mcM_{g,n})$ such that
\begin{gather}\label{equation1}
\frac{\td+(r-3)i}{r}u_i\stackrel{\mod K}{=}\left(\frac{i}{\td}-\frac{1}{3}\right)\alpha, \text{ for $i=1,\ldots,\td-2$}.
\end{gather}
By Lemma \ref{lemma: running point}, we have
\begin{gather}\label{equation2}
iu_i+(\td-i)u_{\td-i}\in\pi_1^*(R^{g-1}(\mcM_{g,n-1})), \text{ for $i=1,2,\ldots,\td-1$}.
\end{gather}
Let us prove that relations \eqref{equation1} and \eqref{equation2} imply that $u_i\in K$, for $i=1,\ldots,\td-1$. This will complete the proof of the proposition.

Suppose $\td=3$, then from \eqref{equation1} it follows that $u_1\in K$, and from \eqref{equation2} it follows that~$u_2\in K$.

Suppose $\td\ge 4$. From \eqref{equation1} and \eqref{equation2} we see that it is sufficient to prove that $\alpha\in K$. Let $2\le i\le\td-2$. From \eqref{equation1} it follows that
\begin{gather}
iu_i+(\td-i)u_{\td-i}\stackrel{\mod K}{=}
\frac{\left(i-\frac{\td}{2}\right)^2(3-\frac{r}{3})+\frac{\td^2(r-1)}{12}}{\left(\td+(r-3)i\right)\left(\td+(r-3)(\td-i)\right)}r\alpha.\label{equation3}
\end{gather}

If $\td=4$ or $\td=5$, then, for $i=2$, the numerator in \eqref{equation3} is nonzero. Therefore,~$\alpha\in K$. Suppose $\td\ge 6$. It is clear that the numerator in \eqref{equation3} cannot be zero for two different values of $i$ that are greater or equal to $\frac{\td}{2}$. Hence, $\alpha\in K$. The proposition is proved.


\subsection{Proof of Proposition \ref{proposition: second step}: main relation}\label{subsection: second step}

In this section we construct a relation between double ramification cycles that is a main ingredient in the proof of Proposition~\ref{proposition: second step}. 

In Section~\ref{subsubsection: basic relation} we construct a basic relation using the same idea as in the proof of Proposition~\ref{proposition: first step}. The problem is that this relation is too complicated to work with. In Section~\ref{subsubsection: new variables} we introduce new variables that are linear combinations of double ramification cycles. This change of variables allows us to simplify the basic relation a lot. This is done in Section~\ref{subsubsection: main relation}.

\subsubsection{Basic relation}\label{subsubsection: basic relation}
  
We denote by $K$ the subspace of $R^{g-2}(\mcM_{g,n})$ defined by
$$
K=\pi_1^*(R^{g-2}(\mcM_{g,n-1}))+\psi_1\pi_1^*(R^{g-3}(\mcM_{g,n-1}))+\sum_{1\le i<j\le n}S^{g-2}_{i,j}(\mcM_{g,n}).
$$ 

Let us fix a triple $((b_1,b_2,b_3),b_4,(a_1,a_2,\ldots,a_{n-1}))$, where $(b_1,b_2,b_3)$ is an unordered triple of non-zero integers, $b_4$ is a non-zero integer, $(a_1,a_2,\ldots,a_{n-1})$ is an ordered sequence of non-zero integers and $\sum b_i+\sum a_j=0$. By Lemma \ref{lemma: running point}, we have
\begin{gather}\label{formula: running point relation}
\sum_{i=1}^4 b_i DR_g\left(m_{b_i}\prod_{j\ne i}\tm_{b_j}\prod_{p=1}^{n-1} m_{a_p}\right)\in\pi_1^*(R^{g-3}(\mcM_{g,n-1})).
\end{gather}
Let us multiply both sides of~\eqref{formula: running point relation} by $\psi_1$. Using Lemmas \ref{lemma: intersection with psi-class} and~\ref{lemma: running point} we get
\begin{align}
&\sum_{\substack{\{i,j,k\}=\{1,2,3\}\\i<j}}\frac{b_i+b_j}{r}\left(r-2+\frac{b_i+b_j}{b_4}\right)DR_g\left(\tm_{b_4}\tm_{b_k}m_{b_i+b_j}\prod_{l=1}^{n-1} m_{a_l}\right)\label{formula: newrelation1}\\
&-\sum_{\substack{\{i,j,k\}=\{1,2,3\}\\i<j}}\frac{b_i+b_j}{r}\left(1-\frac{b_k}{b_4}\right)DR_g\left(\tm_{b_4}\tm_{b_i+b_j}m_{b_k}\prod_{l=1}^{n-1} m_{a_l}\right)\notag\\
&-\sum_{l=1}^{n-1}\sum_{\{i,j,k\}=\{1,2,3\}}\frac{b_i+a_l}{r}\left(1-\frac{b_j}{b_4}\right)DR_g\left(\tm_{b_4}\tm_{b_k}m_{b_j}\prod_{p=1}^{n-1} m_{a_p+\delta_{p,l}b_i}\right)\notag\\
&+\sum_{l=1}^{n-1}\sum_{\substack{\{i,j,k\}=\{1,2,3\}\\i<j}}\frac{b_j-b_i}{r}DR_g\left(\tm_{b_4+a_l}\tm_{b_k}m_{b_i}\prod_{p=1}^{n-1} m_{a_p+\delta_{p,l}(b_j-a_l)}\right)\notag\\
&-\sum_{\{i,j,k\}=\{1,2,3\}}\frac{b_4+b_k+(r-2)b_i}{r}DR_g\left(\tm_{b_4+b_k}\tm_{b_j}m_{b_i}\prod_{l=1}^{n-1} m_{a_l}\right)\in K,\notag
\end{align}
where $r=2g+n+1$. This relation will be called the basic relation. 

\subsubsection{New variables}\label{subsubsection: new variables}

Let $f_1,f_2,\ldots,f_{n+1}$ be arbitrary integers such that not all of them are equal to zero. We introduce the cycle $Z_g\left(\prod_{i=1}^{n+1}m_{f_i}\right)\in R^{g-2}(\mcM_{g,n})$ as follows:
\begin{align*}
Z_g\left(\prod_{i=1}^{n+1}m_{f_i}\right):=&\frac{f_1-(r-n-2)f_2}{r}DR_g\left(\tm_{-d}\tm_{f_1}m_{f_2}\prod_{l=3}^{n+1}m_{f_l}\right)\\
&+\frac{f_2-(r-n-2)f_1}{r}DR_g\left(\tm_{-d}\tm_{f_2}m_{f_1}\prod_{l=3}^{n+1}m_{f_l}\right)\\
&+\frac{f_2-f_1}{r}\sum_{l=3}^{n+1}DR_g\left(\tm_{-d}\tm_{f_l}m_{f_2}\prod_{p=3}^{n+1}m_{f_p+\delta_{p,l}(f_1-f_p)}\right),
\end{align*}
where $r:=2g+n+1$ and $d:=\sum_{i=1}^{n+1}f_i$. Suppose that $d\ne 0$. Define the cycle~$V_g\left(\prod_{i=1}^{n+1}m_{f_i}\right)\in R^{g-2}(\mcM_{g,n})$ by
\begin{align*}
V_g\left(\prod_{i=1}^{n+1}m_{f_i}\right):=&\frac{f_2}{r}\left[\left(r-n-2+\frac{f_1}{d}\right)DR_g\left(\tm_{-d}\tm_{f_1}\prod_{l=2}^{n+1}m_{f_l}\right)\right.\\
&-\left.\left(1+\frac{f_1}{d}\right)\sum_{l=2}^{n+1}DR_g\left(\tm_{-d}\tm_{f_l}\prod_{p=2}^{n+1}m_{f_p+\delta_{p,l}(f_1-f_p)}\right)\right].
\end{align*}
Note that the cycle $Z_g$ is a linear combination of double ramification cycles of the same degree. Thus, the degree of the cycle $Z_g$ is well defined. The same is true for the cycle $V_g$.   

From the definition it immediately follows that
\begin{gather}\label{eq:zero V_g}
V_g\left(\prod_{i=1}^{n+1}m_{f_i}\right)=0,\quad\text{if $f_2=0$}.
\end{gather}
Lemma~\ref{lemma: divisible} implies that
\begin{gather}\label{eq:zero Z_g}
Z_g\left(\prod_{i=1}^{n+1}m_{f_i}\right)=0,\quad\text{if $\sum f_i=0$}.
\end{gather}

It is not hard to show that
$$
V_g\left(m_{f_1}m_{f_2}\prod_{i=3}^{n+1}m_{f_i}\right)+V_g\left(m_{f_2}m_{f_1}\prod_{i=3}^{n+1}m_{f_i}\right)\stackrel{\mod K}{=}-Z_g\left(\prod_{i=1}^{n+1}m_{f_i}\right).
$$
Using Lemma~\ref{lemma: running point} we can easily derive the following relations:
\begin{align}
&f_iV_g\left(\prod_{i=1}^{n+1}m_{f_i}\right)+f_2V_g\left(m_{f_1}m_{f_i}\prod_{k=3}^{n+1}m_{f_{k+\delta_{i,k}(2-k)}}\right)\in K,&&\text{if $3\le i\le n+1$};\label{formula: 2,i-symmetry}\\
&V_g\left(\prod_{i=1}^{n+1}m_{f_i}\right)+V_g\left(m_{f_1}m_{f_2}\prod_{k=3}^{n+1}m_{f_{k+\delta_{i,k}(j-k)+\delta_{j,k}(i-k)}}\right)\in K,&&\text{if $3\le i<j\le n+1$};\label{formula: i,j-symmetry}\\
&V_g\left(\prod_{i=1}^{n+1}m_{f_i}\right)\stackrel{\mod K}{=}V_g\left(m_{-d}\prod_{i=2}^{n+1}m_{f_i}\right),&&\text{if $f_1\ne 0$};\label{formula: 1-symmetry}\\
&V_g\left(\prod_{i=1}^{n+1}m_{f_i}\right)\stackrel{\mod K}{=}Z_g\left(m_{f_1}m_{-d}\prod_{i=3}^{n+1}m_{f_i}\right).&&\label{formula: 2-symmetry}
\end{align}
Also from~\eqref{formula: change sign in DR-cycle} it follows that
\begin{gather}\label{formula: change sign in V-cycle}
V_g\left(\prod_{i=1}^{n+1}m_{f_i}\right)=-V_g\left(\prod_{i=1}^{n+1}m_{-f_i}\right).
\end{gather}

\begin{lemma}\label{lemma: span}
The space $R^{g-2}(\mcM_{g,n})/K$ is spanned by cycles $V_g\left(\prod_{i=1}^{n+1}m_{f_i}\right)$ with positive~$f_i$'s.
\end{lemma}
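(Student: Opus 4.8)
The plan is to reduce, via Lemma~\ref{lemma: DR-cycles span the ring}, to showing that every generator $DR_g\left(\prod_{i=1}^{n-1}m_{a_i}m_{-d}\m_{b_1}\m_{b_2}\right)$ (with $a_i,b_1,b_2,d>0$ and $\sum a_i+b_1+b_2=d$) lies modulo $K$ in the span of the cycles $V_g\left(\prod_{i=1}^{n+1}m_{f_i}\right)$ with positive $f_i$. Since these generators span $R^{g-2}(\mcM_{g,n})$, this is enough. I will assume $g\ge 3$ throughout; the case $g=2$ is separate and direct, since there $R^{g-2}(\mcM_{g,n})=\mbQ$.

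First I would convert the generator into the shape appearing in the definition of $V_g$, in which the unique negative index sits on a \emph{forgotten} point $\m_{-d}$ rather than on a marked point. For this, apply Lemma~\ref{lemma: running point} to the three indices $-d,b_1,b_2$: the weighted sum of the three DR-cycles obtained by promoting each of them to a marked point lies in $\pi_1^*(R^{g-2}(\mcM_{g,n-1}))\subset K$. The term promoting $-d$ is, after repositioning the marked points (which costs only a sign modulo $K$, because $S^{g-2}_{k,l}(\mcM_{g,n})\subset K$ forces $\alpha\equiv -i_{k,l}^*\alpha$), exactly our generator; the two remaining terms are DR-cycles whose single negative index is the forgotten point $\m_{-d}$ and all of whose other indices are positive. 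Thus modulo $K$ the generator equals an explicit linear combination of such ``one-negative-tilde'' cycles — and these are precisely the cycles that, by definition, are assembled into the $V_g\left(\prod m_{f_i}\right)$ with positive $f_i$.

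It then remains to invert the definition: to show that each one-negative-tilde cycle $DR_g\left(\m_{-d}\m_{f_j}\prod_{l\ne j}m_{f_l}\right)$ is spanned, modulo $K$, by the $V_g$'s with positive arguments. Expanding $V_g$ and again using the marked-point anti-symmetry $\alpha\equiv -i_{k,l}^*\alpha$ reduces this, for each fixed degree $d$, to a finite and completely explicit linear system whose unknowns are these cycles; the symmetry relations~\eqref{formula: 2,i-symmetry}, \eqref{formula: i,j-symmetry}, the sign rule~\eqref{formula: change sign in V-cycle}, and the passage between $V_g$ and $Z_g$ recorded in~\eqref{formula: 2-symmetry} supply the extra equations needed to match the combinatorial symmetry. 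The one manifestly favorable fact is that, since $r=2g+n+1$, one has $r-n-1=2g>0$, so the ``diagonal'' coefficients $2g+2f_j/d$ of this system never vanish for positive $f_j$. I expect the hard part to be precisely the non-degeneracy of this system uniformly in the values: one must control the rank-one correction coming from the symmetric sum $\sum_{l}$ in the definition of $V_g$ (which is exactly what couples the unknowns), and one must treat separately those multisets in which two of the values coincide, where anti-symmetry forces some cycles to vanish and the count of independent unknowns drops.
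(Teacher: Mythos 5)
Your first step coincides with the paper's: combining Lemma~\ref{lemma: DR-cycles span the ring} with Lemma~\ref{lemma: running point} applied to the three indices $-d,b_1,b_2$, and using that $\alpha\equiv-i_{k,l}^*\alpha$ modulo $K$ (because $\alpha+i_{k,l}^*\alpha\in S^{g-2}_{k,l}(\mcM_{g,n})\subset K$), one reduces to showing that every cycle $DR_g\left(\tm_{-d}\tm_b\prod_{i=1}^n m_{a_i}\right)$ with $b,a_1,\dots,a_n>0$ lies modulo $K$ in the span of the $V_g$'s. The genuine gap is the second step. What you call ``the hard part'' --- the non-degeneracy of the linear system relating these one-negative-tilde cycles to the cycles $V_g$ --- \emph{is} the content of the lemma, and you give no argument for it: you only observe that certain diagonal coefficients are nonzero, which implies nothing about invertibility, precisely because of the coupling term $\sum_l$ in the definition of $V_g$ that you yourself point out. (That observation is also miscalculated: the diagonal coefficient coming from the definition of $V_g$ is $\frac{f_2}{r}\left(r-n-2+\frac{f_1}{d}\right)$ with $r-n-2=2g-1$, not $2g+2f_j/d$.) As written, your proposal is a correct reduction plus a statement of the remaining problem, not a proof.

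The missing idea, which is how the paper closes this step, is a good choice of equations. Fix positive integers $f_1,\dots,f_{n+1}$ with sum $d$; take as unknowns the $n+1$ cycles $\alpha_k=DR_g\left(\tm_{-d}\tm_{f_k}\prod_{p=2}^{n+1}m_{f_p+\delta_{p,k}(f_1-f_p)}\right)$ and as equations the $n+1$ \emph{cyclic rotations} $\beta_k=V_g\left(\prod_{i=k}^{n+1}m_{f_i}\prod_{j=1}^{k-1}m_{f_j}\right)$. Writing $\beta_i\equiv\sum_j g_{i,j}\alpha_j$ modulo $K$, the matrix $(g_{i,j})$ factors as $D\cdot\widetilde G\cdot\diag(-1,1,\dots,1)$, where $D$ is diagonal with nonzero entries $(-1)^{(i-1)n+1}f_{i+1}/r$ and $\tilde g_{i,j}=(r-n-3)\delta_{i,j}+1+\frac{f_i}{d}$. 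The matrix $\widetilde G$ is a rank-one perturbation of a scalar matrix, so, using $\sum f_i=d$, one computes $\det\widetilde G=(r-n-3)^n(r-1)=(2g-2)^n(2g+n)\ne 0$. This determinant does not depend on the $f_i$ at all, so the two complications you anticipated --- controlling the rank-one coupling and treating multisets with repeated values separately --- simply do not arise; nor is any restriction to $g\ge 3$ needed.
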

\begin{proof}
Lemmas~\ref{lemma: running point} and~\ref{lemma: DR-cycles span the ring} imply that the space $R^{g-2}(\mcM_{g,n})/K$ is spanned by double ramification cycles of the form
$$
DR_g\left(\tm_{-d}\tm_b\prod_{i=1}^n m_{a_i}\right),
$$
where $a_1,\ldots,a_n$ and $b$ are positive integers and $d=b+\sum a_i$. For $1\le k\le n+1$, let 
\begin{align*}
&\alpha_k:=DR_g\left(\tm_{-d}\tm_{f_k}\prod_{p=2}^{n+1}m_{f_p+\delta_{p,k}(f_1-f_p)}\right),\\
&\beta_k:=V_g\left(\prod_{i=k}^{n+1}m_{f_i}\prod_{j=1}^{k-1} m_{f_j}\right).
\end{align*}
From the definition of cycles $V_g$ it is easy to compute that $\beta_i\stackrel{\mod K}{=}\sum_{j=1}^{n+1}g_{i,j}\alpha_j$, where
\begin{gather*}
g_{i,j}=
\begin{cases}
(-1)^{(i-1)n+1+\delta_{j,1}}\frac{f_{i+1}}{r}\left(r-n-2+\frac{f_i}{d}\right),&\text{if $i=j$};\\
(-1)^{(i-1)n+1+\delta_{j,1}}\frac{f_{i+1}}{r}\left(1+\frac{f_i}{d}\right),&\text{if $i\ne j$}.
\end{cases}
\end{gather*}
Here we, by definition, put $f_{n+2}:=f_1$. We see that it is sufficient to prove that the matrix $G:=(g_{i,j})$ is non-degenerate. Consider the matrix $\widetilde G:=(\tilde g_{i,j})$ defined by 
\begin{gather*}
\tilde g_{i,j}=
\begin{cases}
r-n-2+\frac{f_i}{d},&\text{if $i=j$};\\
1+\frac{f_i}{d},&\text{if $i\ne j$}.
\end{cases}
\end{gather*}
Denote by $D$ the diagonal matrix with the entries $(-1)^{(i-1)n+1}\frac{f_{i+1}}{r}$ on the diagonal. It is clear that the matrices $G$ and $\widetilde G$ are related by 
$$
G=D\cdot\widetilde G\cdot\diag(-1,1,1\ldots,1).
$$
Therefore, it is sufficient to prove that the matrix $\widetilde G$ is non-degenerate. It is easy to compute that $\det\widetilde G=(r-n-3)^n(r-1)\ne 0$, so the lemma is proved.
\end{proof}

\subsubsection{Main relation}\label{subsubsection: main relation}

Let us consider the same triple $((b_1,b_2,b_3),b_4,(a_1,a_2,\ldots,a_{n-1}))$, as in Section~\ref{subsubsection: basic relation}. Choose an arbitrary $1\le p\le n-1$ and consider four basic relations corresponding to the following triples:
\begin{align*}
&((b_1,b_2,b_3),b_4,(a_1,a_2,\ldots,a_{n-1})),\\
&((b_1,b_2,a_p),b_4,(a_1,\ldots,a_{p-1},b_3,a_{p+1},\ldots,a_{n-1})),\\
&((b_1,a_p,b_3),b_4,(a_1,\ldots,a_{p-1},b_2,a_{p+1},\ldots,a_{n-1})),\\
&((a_p,b_2,a_3),b_4,(a_1,\ldots,a_{p-1},b_1,a_{p+1},\ldots,a_{n-1})).
\end{align*}
Let us sum these relations with the coefficients $\left(1-\frac{a_p}{b_4}\right)$, $\left(1-\frac{b_3}{b_4}\right)$, $\left(1-\frac{b_2}{b_4}\right)$ and $\left(1-\frac{b_1}{b_4}\right)$ correspondingly. We get the following:
\begin{multline}\label{formula: newrelation2}
\sum_{\substack{\{i,j,k,l\}=\{1,2,3,4\}\\i<j}}\left[(c_i+c_j)\left(1-\frac{c_k}{b_4}\right)DR_g\left(\tm_{b_4}\tm_{c_l}m_{c_i+c_j}\prod_{q=1}^{n-1}m_{a_q+\delta_{q,p}(c_k-a_q)}\right)\right.\\
+\left.(c_j-c_i)DR_g\left(\tm_{b_4+c_l}\tm_{c_k}m_{c_i}\prod_{q=1}^{n-1}m_{a_q+\delta_{q,p}(c_j-a_q)}\right)\right]\in K, 
\end{multline}
where $c_i=b_i$, for $1\le i\le 3$, and $c_4=a_p$. Consider the sum of relations~\eqref{formula: newrelation2}, for $p=1,2,\ldots,n-1$. If we subtract this sum, divided by $r$, from~\eqref{formula: newrelation1}, we get
\begin{gather}\label{formula: main relation}
\sum_{\substack{\{i,j,k\}=\{1,2,3\}\\i<j}}\left[V_g\left(m_{b_k}m_{b_i+b_j}\prod_{l=1}^{n-1}m_{a_l}\right)+Z_g\left(m_{b_i}m_{b_j}\prod_{l=1}^{n-1}m_{a_l}\right)\right]\in K.
\end{gather}
This relation will be called the main relation.

\begin{remark}
In the subsequent sections we want to work with relations between cycles $V_g\left(\prod_{i=1}^{n+1}m_{f_i}\right)$ and $Z_g\left(\prod_{i=1}^{n+1}m_{f_i}\right)$, where $f_i\ne 0$ and $\sum f_i\ne 0$. At first glance, relation~\eqref{formula: main relation} contains the other cycles as well, because in the first summand the multiplicity~$b_i+b_j$ can be equal to zero, and in the second summand the sum of the multiplicities $b_i+b_j+\sum a_l=-b_4-b_k$ can be equal to zero. Happily, these extra terms vanish due to properties~\eqref{eq:zero V_g} and~\eqref{eq:zero Z_g}.
\end{remark}

\begin{remark}
In the case $n\ge 2$ our proof of Proposition~\ref{proposition: second step} is based only on the main relation~\eqref{formula: main relation} and relations~\eqref{formula: 2,i-symmetry}-\eqref{formula: change sign in V-cycle}. The case $n=1$ is exceptional, because we also have to use Lemma~\ref{lemma: divisible}.
\end{remark}

\subsubsection{Proof of Proposition \ref{proposition: second step}: the case $n=1$}\label{subsection: n=1}

Let $v_{i,j}:=V_g(m_im_j)$ and $z_{i,j}:=Z_g(m_im_j)$. By Lemma~\ref{lemma: span}, it is sufficient to prove that~$v_{i,j}\in K$, for $i,j\ge 1$. 

First of all let us write relations~\eqref{formula: 1-symmetry},~\eqref{formula: 2-symmetry} and \eqref{formula: change sign in V-cycle} in this case:
\begin{gather}\label{eq:tmp n=1 relations}
v_{i,j}\stackrel{\mod K}{=}v_{-(i+j),j},\qquad v_{i,j}\stackrel{\mod K}{=}z_{i,-(i+j)},\qquad v_{i,j}\stackrel{\mod K}{=}-v_{-i,-j}.
\end{gather}

Let $a_1,a_2,a_3$ be positive integers and $d=a_1+a_2+a_3$.  Let us write relation~\eqref{formula: main relation} for~$b_1=a_1, b_2=a_2, b_3=a_3$ and $b_4=-d$:
\begin{gather}\label{formula: n=1, relation1}
\sum_{i=1}^3 v_{a_i,d-a_i}+\sum_{i<j}z_{a_i,a_j}\in K.
\end{gather}

Let $a_1,a_2$ and $c_1,c_2$ be positive integers such that $a_1+a_2=c_1+c_2$. Let us write relation~\eqref{formula: main relation} for $b_1=a_1$, $b_2=a_2$, $b_3=-c_1$ and $b_4=-c_2$. We get a linear combination of cycles~$z_{i,j}$ and~$v_{k,l}$, where indices $i,j,k,l$ can be negative. If we apply relations~\eqref{eq:tmp n=1 relations} in order to make all indices positive, we get
\begin{gather}\label{formula: n=1, relation2}
z_{a_1,a_2}-z_{c_1,c_2}-\sum_{a_i>c_j}v_{c_j,a_i-c_j}+\sum_{c_i>a_j}v_{a_j,c_i-a_j}\in K.
\end{gather}

From Lemma~\ref{lemma: linear system2} it follows that, for any $d\ge 2$, there is a class $\alpha_d\in R^{g-2}(\mcM_{g,1})$ such that 
\begin{gather}\label{formula: solution}
v_{i,j}\stackrel{\mod K}{=}\left(\frac{i}{i+j}-\frac{1}{3}\right)\alpha_{i+j}+\left(\frac{1}{3}-\frac{i+j}{i}\right)\alpha_i+\frac{1}{3}\alpha_j.
\end{gather}
Here we, by definition, put $\alpha_1:=0$.

From \eqref{formula: Hain's formula} it follows that $v_{ai,aj}=a^{2g+1}v_{i,j}$. Hence, $v_{d,d}=d^{2g+1}v_{1,1}$. Using \eqref{formula: solution} we obtain
\begin{gather}\label{double}
\alpha_{2d}\stackrel{\mod K}{=}8\alpha_d+d^{2g+1}\alpha_2.
\end{gather}
We have $v_{2d-2,2}=2^{2g+1}v_{d-1,1}$. Using \eqref{formula: solution} we get
\begin{multline*}
\left(\frac{d-1}{d}-\frac{1}{3}\right)\alpha_{2d}+\left(\frac{1}{3}-\frac{d}{d-1}\right)\alpha_{2d-2}+\frac{\alpha_2}{3}\\
\stackrel{\mod K}{=}2^{2g+1}\left(\left(\frac{d-1}{d}-\frac{1}{3}\right)\alpha_{d}+\left(\frac{1}{3}-\frac{d}{d-1}\right)\alpha_{d-1}\right).
\end{multline*}
If we combine this equation with \eqref{double}, we get
\begin{multline}\label{recursion}
\left(\frac{d-1}{d}-\frac{1}{3}\right)(8-2^{2g+1})\alpha_d+\left(\frac{1}{3}-\frac{d}{d-1}\right)(8-2^{2g+1})\alpha_{d-1}+\\
+\left(\left(\frac{d-1}{d}-\frac{1}{3}\right)d^{2g+1}+\left(\frac{1}{3}-\frac{d}{d-1}\right)(d-1)^{2g+1}+\frac{1}{3}\right)\alpha_2\in K.\end{multline}

Relation \eqref{recursion} allows to compute all $\alpha_d$, for $d\ge 3$, it terms of $\alpha_2$. It is not hard to check that this recursion has the following solution:
$$
\alpha_d\stackrel{\mod K}{=}\frac{d^3-d^{2g+1}}{2^3-2^{2g+1}}\alpha_2.
$$
Therefore, we have
\begin{gather}\label{formula: formula for v}
v_{i,j}\stackrel{\mod K}{=}\left[\left(\frac{i}{i+j}-\frac{1}{3}\right)(i+j)^{2g+1}+\left(\frac{1}{3}-\frac{i+j}{i}\right)i^{2g+1}+\frac{j^{2g+1}}{3}\right]\alpha_2.
\end{gather}

From Lemma~\ref{lemma: divisible} it follows that the class $DR_g(\m_a\m_b m_{-a-b})$ is a homogeneous polynomial of degree $2g$ in the variables $a$ and $b$. Moreover, this polynomial is divisible by $ab$. Hence, we have
\begin{gather}\label{formula: limit}
\lim_{a\to\infty}\frac{1}{a^{2g}}DR_g(\m_a\m_1 m_{-a-1})=0.
\end{gather}
It is easy to compute that
$$
\frac{(r-1)(r-4)}{r-3}DR_g(\m_i\m_j m_{-i-j})\stackrel{\mod K}{=}\frac{r}{r-3}\frac{iv_{i,j}+jv_{j,i}}{ij}+\frac{r}{i+j}(v_{i,j}+v_{j,i}).
$$
If we substitute here the expression~\eqref{formula: formula for v} for $v_{i,j}$, we get
$$
\lim_{a\to\infty}\frac{1}{a^{2g}}DR_g(\m_a\m_1 m_{-a-1})\stackrel{\mod K}{=}\frac{r}{(r-1)(r-4)}\frac{4g-4}{3}\alpha_2.
$$
From~\eqref{formula: limit} it now follows that $\alpha_2\in K$. This completes the proof of the proposition in the case $n=1$.

\subsubsection{Proof of Proposition \ref{proposition: second step}: the case $n=2$}\label{subsubsection: n=2}

Let $v_{i,j,k}:=V_g(m_im_km_k)$ and $z_{i,j,k}:=Z_g(m_im_km_k)$. By Lemma~\ref{lemma: span}, it is sufficient to prove that $v_{i,j,k}\in K$ for $i,j,k\ge 1$. The problem here is that relations~\eqref{formula: main relation}, that involve only these terms, are not enough. So we have to consider also classes $v_{i,j,k}$ with negative indices.

In this section we consider cycles $v_{i,j,k}$ and $z_{i,j,k}$ as elements of $R^{g-2}(\mcM_{g,n})/K$. So, instead of writing $v_{i,j,k}\stackrel{\mod K}{=}0$, we will simply write $v_{i,j,k}=0$.

Let us write relations \eqref{formula: 2,i-symmetry}, \eqref{formula: 1-symmetry}, \eqref{formula: 2-symmetry} and \eqref{formula: change sign in V-cycle} in this case:
\begin{align}
&k v_{i,j,k}+j v_{i,k,j}=0,\label{formula: a1}\\
&v_{i,j,k}=v_{-(i+j+k),j,k},\label{formula: a2}\\
&v_{i,j,k}=z_{i,-(i+j+k),k},\label{formula: a3}\\
&v_{i,j,k}=-v_{-i,-j,-k}.\label{formula: a4}
\end{align}
From these relations it follows that any class $v_{a,b,c}$, where $a,b,c\ne 0$ and $a+b+c\ne 0$, can be expressed in terms of classes~$v_{i,j,k}$ with $i,j,k\ge 1$ or classes $v_{i,j,-k}$ with $i,j,k\ge 1$ and $k<i+j$. Therefore, we have to prove that  
\begin{align}
&v_{i,j,k}=0, \text{ where $i,j,k\ge 1$},\label{formula: first}\\
&v_{i,j,-k}=0, \text{ where $i,j,k\ge 1$ and $k<i+j$}.\label{formula: second}
\end{align}

We prove it by induction on the degree $d$. In~\eqref{formula: first} the degree of $v_{i,j,k}$ is $i+j+k$ and in~\eqref{formula: second} the degree of $v_{i,j,-k}$ is $i+j$. The smallest possible degree is $2$ and the class $v_{1,1,-1}$ if the only class of degree $2$. We have $v_{1,1,-1}\mathop{=}\limits^{\text{by \eqref{formula: a1}}}v_{1,-1,1}\mathop{=}\limits^{\text{by \eqref{formula: a2}}}v_{-1,-1,1}\mathop{=}\limits^{\text{by \eqref{formula: a4}}}-v_{1,1,-1}$. Thus, $v_{1,1,-1}=0$. 

Suppose that $d\ge 3$. In the main relation \eqref{formula: main relation} the numbers $b_i$ and $a_j$ can be positive or negative. In Section~\ref{subsubsection: n=2 case, relations} we write explicitly the relations, that we have, depending on the numbers of positive $b_i$'s and $a_j$'s. Using the induction assumption we ignore the terms with the smaller degree. Then in Section~\ref{subsubsection: proof of first} we prove~\eqref{formula: first} and in Section~\ref{subsubsection: proof of second} we prove~\eqref{formula: second}.

\subsubsection{Relations}\label{subsubsection: n=2 case, relations}

Let $c_1,c_2,c_3,a$ be positive integers such that $c_1+c_2+c_3+a=d$. Then, if we put $b_1=c_1$, $b_2=c_2$, $b_3=c_3$, $b_4=-d$ and $a_1=a$ in \eqref{formula: main relation}, we get
\begin{gather}\label{formula: b1}
\sum_{\substack{\{i,j,k\}=\{1,2,3\}\\i<j}}v_{c_k,c_i+c_j,a}=0.
\end{gather}

Let $c_1,c_2,c_3,c_4,a$ be positive integers such that $c_1+c_2+a=c_3+c_4=d$. Putting $b_1=c_1$, $b_2=c_2$, $b_3=-c_3$, $b_4=-c_4$ and $a_1=a$ in \eqref{formula: main relation}, we get
$$
v_{-c_3,c_1+c_2,a}+z_{c_1,c_2,a}=0.
$$
Applying relations~\eqref{formula: a3} and \eqref{formula: a4}, we get
\begin{gather}\label{formula: b3}
z_{c_1,c_2,a}-z_{c_3,c_4,-a}=0.
\end{gather}

Let $c_1,c_2,c_3,c_4,a$ be positive integers such that $c_1+c_2+c_3=a+c_4=d$. Then, if we put $b_1=c_1$, $b_2=c_2$, $b_3=c_3$, $b_4=-c_4$ and $a_1=-a$ in \eqref{formula: main relation}, we get
\begin{gather}\label{formula: b2}
\sum_{\substack{\{i,j,k\}=\{1,2,3\}\\i<j}}v_{c_k,c_i+c_j,-a}=0.
\end{gather}

\subsubsection{Proof of \eqref{formula: first}}\label{subsubsection: proof of first} 

We have relations~\eqref{formula: a1} and~\eqref{formula: b1}, therefore, we can apply Lemma~\ref{lemma: linear system3}. By this lemma, the cases $d=3,4$ are done. Suppose $d\ge 5$, then there exists a class $\alpha\in R^{g-2}(\mcM_{g,n})/K$ such that
\begin{gather*}
v_{i,j,k}=\left(\frac{i}{d-1}-\frac{1}{3}\right)\left(\delta_{k,1}-\frac{1}{k}\delta_{j,1}\right)\alpha.
\end{gather*}
By \eqref{formula: b3}, we have $z_{d-2,1,1}=z_{d-3,2,1}$. Therefore, we get 
$$
0=z_{d-2,1,1}-z_{d-3,2,1}=-\left(\frac{1}{d-1}-\frac{1}{3}\right)\alpha+\frac{\alpha}{3}=\left(-\frac{1}{d-1}+\frac{2}{3}\right)\alpha.
$$
Since the coefficient $\left(-\frac{1}{d-1}+\frac{2}{3}\right)$ is not equal to zero, we have $\alpha=0$. This completes the proof of \eqref{formula: first}. 

\subsubsection{Proof of \eqref{formula: second}}\label{subsubsection: proof of second}
We have
\begin{gather}\label{formula: symmetry property}
v_{i,j,-k}\mathop{=}^{\text{by \eqref{formula: a2}}}v_{-(d-k),j,-k}\mathop{=}^{\text{by \eqref{formula: a1}}}\frac{j}{k}v_{-(d-k),-k,j}\mathop{=}^{\text{by \eqref{formula: a4}}}-\frac{j}{k}v_{d-k,k,-j}.
\end{gather}
As a consequence, we get
\begin{gather}\label{formula: equal}
v_{i,j,-j}=0.
\end{gather}

Suppose that $d=3$. Then we have only four classes $v_{1,2,-1}$, $v_{1,2,-2}$, $v_{2,1,-1}$ and $v_{2,1,-2}$. By \eqref{formula: equal}, $v_{1,2,-2}=v_{2,1,-1}=0$. By \eqref{formula: symmetry property}, $v_{2,1,-2}=-\frac{1}{2}v_{1,2,-1}$. Finally, by~\eqref{formula: b2}, $v_{1,2,-1}=0$.

Suppose that $d\ge 4$. From relations~\eqref{formula: b2} and Lemma \ref{lemma: linear system} it follows that, for any $1\le k\le d-1$, there exists a class $\alpha_k\in R^{g-2}(\mcM_{g,n})/K$ such that
\begin{gather*}
v_{i,j,-k}=\left(\frac{i}{d}-\frac{1}{3}\right)\alpha_k,\text{ if $2\le j\le d-1$}.
\end{gather*}
On the other hand, from \eqref{formula: b3} and \eqref{formula: first} it follows that $z_{i,j,-k}=0$, if $k\le d-2$. If $i,j\ge 2$, then $z_{i,j,-k}=-\frac{\alpha_k}{3}$. Thus, $\alpha_k=0$, if $1\le k\le d-2$. Therefore $v_{i,j,-k}=0$, if $j\ge 2$ and $k\le d-2$. Since $v_{d-1,1,-k}=-z_{d-1,1,-k}-v_{1,d-1,-k}$, we conclude that~$v_{i,j,-k}=0$, if~$k\le d-2$.

It remains to prove that $v_{i,j,-(d-1)}=0$. Applying \eqref{formula: symmetry property}, we get $v_{i,j,-(d-1)}=0$, if $j\le d-2$. Finally, by \eqref{formula: equal}, $v_{1,d-1,-(d-1)}=0$. This completes the proof of Proposition~\ref{proposition: second step} in the case $n=2$.

\subsubsection{Proof of Proposition~\ref{proposition: second step}: the case $n\ge 3$}\label{subsubsection: n bigger than 2}

From Lemma~\ref{lemma: span} it follows that it is sufficient to prove that $V_g\left(\prod_{i=1}^{n+1}m_{a_i}\right)\in K, \text{if $a_i\ge 1$}$. 

We proceed by induction on the degree $d=\sum_{i=1}^{n+1}a_i$. The smallest possible degree is~$n+1$, then $V_g\left(\prod_{i=1}^{n+1}m_1\right)\in K$, because of the property~\eqref{formula: 2,i-symmetry}. 

Suppose that $d\ge n+2$. From relations \eqref{formula: main relation}, \eqref{formula: 2,i-symmetry} and Lemma \ref{lemma: linear system3} it follows that $V_g\left(\prod_{i=1}^{n+1}m_{a_i}\right)\in K$, if $a_2\ge 2$ and $a_3\ge 2$. Applying \eqref{formula: 2,i-symmetry} and \eqref{formula: i,j-symmetry}, we get that $V_g\left(\prod_{i=1}^{n+1}m_{a_i}\right)\in K$, if there exist $n+1\ge j>i\ge 2$ such that $a_i,a_j\ge 2$. Suppose there exists at most one $i\ge 2$ such that $a_i\ge 2$. Since $n\ge 3$, there exist $n+1\ge j>k\ge 2$ such that $a_j=a_k=1$. If we again apply \eqref{formula: 2,i-symmetry} or \eqref{formula: i,j-symmetry}, we get $V_g\left(\prod_{i=1}^{n+1}m_{a_i}\right)\in K$. This completes the proof of Proposition~\ref{proposition: second step} in the case $n\ge 3$. 


\bigskip
\footnotesize
\noindent\textit{Acknowledgments.} The authors are grateful to C. Faber, M. Kazarian and R. Pandharipande for useful discussions. We would like to thank the anonymous referee for valuable remarks and suggestions that allowed us to improve the exposition of this paper.

A.~B. was supported by grant ERC-2012-AdG-320368-MCSK in the group of R. Pandharipande at ETH Zurich, by a Vidi grant of the Netherlands Organization for Scientific Research, Russian Federation Government grant no. 2010-220-01-077 (ag. no. 11.634.31.0005), the grants RFFI 13-01-00755, NSh-4850.2012.1, the Moebius Contest Foundation for Young Scientists and "Dynasty" foundation.

S.~S. was supported by a Vidi and a Vici grants of the Netherlands Organization for Scientific Research.

D.~Z. was supported by the grant ANR-09-JCJC-0104-01.

\end{document}